\def\split{true}
\newtheorem{theorem}{Theorem}
\newtheorem{lemma}{Lemma}
\newtheorem{proposition}{Proposition}
\newtheorem{defn}{Definition}
\newtheorem{example}{Example}
\newtheorem{remark}{Remark}
\def\1{{1\hskip-2.5pt{\rm l}}}
\def\eps{\varepsilon}
\def\PA{P_{\mathcal A}}
\def\VA{v_{\mathcal A}}
\def\VAn{v_{\mathcal A_n}}
\def\A{\mathcal A}
\def\F{\mathcal F}
\def\M{\mathcal M}
\def\eps{\varepsilon}
\begin{document}
\thanks{I am thankful to Yaron Azrieli, Ehud Lehrer and Chen Meiri for their helpful comments.
\\School of Mathematical Sciences, Tel Aviv University, Tel Aviv
69978, Israel. \\e-mail: ~teperroe@post.tau.ac.il}
\title{the induced capacity and choquet integral monotone convergence}
\author{Roee Teper}

\maketitle

\centerline{\today}
\bigskip
\bigskip\bigskip
\bigskip
\textbf{Abstract.} Given a probability measure over a state space, a
partial collection (sub-$\sigma$-algebra) of events whose
probabilities are known, induces a capacity over the collection of
all possible events. The \emph{induced capacity} of an event
$F$ is the probability of the maximal (with respect to inclusion)
event contained in $F$ whose probability is known. The
Choquet integral with respect to the induced capacity coincides
with the integral with respect to a \emph{probability specified on
a sub-algebra} (Lehrer \cite{Lehrer2}). We study Choquet
integral monotone convergence and apply the results to the
integral with respect to the induced capacity. The paper characterizes the properties of
sub-$\sigma$-algebras and of induced capacities which yield integral
monotone convergence.
\\
\\\emph{JEL classification: D81.}
\\\emph{Keywords: Induced Capacity, Choquet Integral, Monotone Convergence.}
\newpage

\section{Introduction}
In many economic situations individuals face uncertainties
regarding upcoming events. The probability of these events is
often unknown, and decision making is left to
subjective belief. The Ellsberg paradox \cite{Ell} demonstrates a
situation where (additive) expected utility theory (Savage \cite{Sav} and Anscombe and Aumann
\cite{Ans-Aum})  is violated due to partial information
obtained by the decision maker on the underlying probability.

Several proposed variations of the model relax the assumption of
additivity of the subjective probability (e.g.  Schmeidler
\cite{Sch}, Gilboa and Schmeidler \cite{Sch2}). Here we adopt a
recent model by Lehrer \cite{Lehrer2} which suggests a new
approach to decision making under uncertainty. The model describes
a decision maker who is partially informed about the underlying
probability. The information consists of the probability of some
(but maybe not all) events. According to Lehrer the decision maker
then assess her alternatives with only the information obtained
and  completely  ignores unavailable information.

What Lehrer actually suggests is a new integral for functions
which are not measurable  w.r.t. (with
respect to) the available information
given by a sub-$\sigma$-algebra. Given a probability measure space
$(X,\F, P)$ and a sub-$\sigma$-algebra $\A\subseteq \F$, the
integral of an $\F$-measurable function, say $f$, w.r.t.\  the probability $P$ restricted to $\A$,
 is the supremum
over all integrals of $\A$-measurable functions that are smaller
than or equal to $f$.

The integral induces a convex capacity over $\F$ as follows: the
capacity of an $\mathcal F$-measurable set $F$ is the integral of
the characteristic function of $F$, in other words, the
probability of the maximal (w.r.t.\ inclusion) $\A$-measurable set
contained in $F$. We call such a capacity the \emph{induced
capacity} by $\A$.

It turns out that the integral of an $\F$-measurable function
w.r.t.\ the restriction of $P$ to $\A$ coincides with the Choquet
integral (Choquet \cite{Choquet}) and the concave integral (Lehrer \cite{Lehrer1}) w.r.t.\ the induced
capacity.

The theory of the Lebesgue integral of sequences of functions is well
known. Choquet  and concave integral convergence theorems were also proved in
several versions (e.g. Li and Song \cite{Song}, Murofhushi and
Sugeno \cite {Suge2}, Lehrer and Teper \cite{Lehrer3}). In this analysis the precise definition of
what is `almost everywhere' w.r.t.\ a capacity is crucial. Several
definitions have been suggested (e.g. Klir and Wang \cite{Wang},
Lehrer and Teper \cite{Lehrer3}), studied and applied to integral
convergence theorems.

We focus on two definitions of `almost everywhere' convergence
w.r.t.\ a capacity which coincide with the usual definition in the
additive case. Utilizing these definitions of `almost everywhere' convergence
we prove new Choquet and concave integral monotone convergence theorems.

Applying integral monotone convergence, we study the convergence
of the integral w.r.t.\ the induced capacity. Sequences of
functions converging in different ways require different
properties of the induced capacity in order to obtain integral
convergence. Since the induced capacity is determined by a
sub-$\sigma$-algebra, different structures of the sub-$\sigma$-algebra
would yield convergence theorems for different types of convergent
sequences. We characterize these properties for several types of
convergence.
\\

The paper is organized as follows: Section $2$ presents the
notions of capacity, integration with respect to a capacity and `almost everywhere'
convergence. Then integral  monotone convergence is studied. In
Section $3$ we give the definition of an induced capacity by a sub
$\sigma$-algebra. The motivation behind the concept of the
induced capacity is brought in Section $4$. Sections $5$ and $6$
study the required properties of the sub-$\sigma$-algebra and of
the induced capacity which yield integral monotone convergence for
different types of converging sequences of functions. Finally,
discussion and comments appear in section $7$.

\section{Prelimanaries: Monotone Convergence of the Choquet
Integral}\label{sect_int_conv}

\subsection{The Choquet Intergral for Capacities}
Let $(X,\F)$ be a measurable space. A finite set function
$v:\mathcal F\rightarrow [0,\infty)$ is a \emph{capacity} if
$v(\emptyset)=0$ and  $v(F)\leq v(E)$ whenever $F\subseteq E$. A
capacity $v$ is convex (supermodular) if $v(F)+v(E)\leq v(F\cup
E)+v(F\cap E)$ for every $F,E$.

Denote by $\mathcal M$ the collection of all nonnegative
$\F$-measurable functions. The Choquet integral (see Choquet
\cite{Choquet}) of $f\in\mathcal M$ w.r.t.\ a capacity $v$ is
defined by
$$\int^{Cho}_Xfdv:=\int_0^{\infty}v(\{x :~ f(x)\geq t\})dt,$$ where
the latter integral is the extended Riemann integral. By the
definition of the Riemann integral,
$$\int^{Cho}_Xfdv=\sup\bigg\{\sum_{i=1}^N\lambda_i v(F_i):
 \sum_{i=1}^N\lambda_i \1_{F_i}\leq f\textrm{ and }\{F_i\}_i\in\mathcal F
 \textrm { is decreasing}
 ,\lambda_i\geq 0,N\in\mathbb N\bigg\},$$ where for every $F\in\F$, $\1_F$ is the
 characteristic function of $F$, and by decreasing we mean that $F_{i+1}\subseteq F_i$ for all
 $i<N$.

\subsection{Almost Everywhere Convergence} When discussing
sequences of functions, then ``almost everywhere'' convergence  arises
naturally. We study two different definitions for almost
everywhere convergence in the nonadditive case that coincide with
the usual definition in the additive case.

When a capacity is a measure, a sequence converges almost everywhere
if it converges over a set of full measure. Wang and Klir
\cite{Wang} proposed a  definition for almost everywhere convergence
when discussing a non-additive capacity $v$. According to their
 definition, a sequence  $\{f_n\}_n$ converges to $f$ $v$-a.e. iff
\\$v\left(\{x\in X : f_n(x)\nrightarrow f(x)\}\right)=0$. Since we
later define a stronger version of almost everywhere convergence,
whenever $v\left(\{x\in X : f_n(x)\nrightarrow f(x)\}\right)=0$ we
say that that $\{f_n\}_n$ converges to $f$ \emph{weakly $v$-a.e.\ }

Lehrer and Teper \cite{Lehrer3} introduced a stronger definition to
almost everywhere convergence.  We say that $\{f_n\}_n$ converges to
$f$ \emph{strongly $v$-a.e.\ }  iff $v\left(\{x\in F :
f_n(x)\rightarrow f(x)\}\right)=v(F)$ for all $F\in\mathcal F$. It
is also shown in \cite{Lehrer3} that if the capacity $v$ is convex,
then $v\left(\{x\in X : f_n(x)\rightarrow f(x)\}\right)=v(X)$
implies  that  $\{f_n\}_n$ converges to $f$ strongly $v$-a.e.

\begin{defn} A capacity $v$ is \emph{ null-additive} if $v(E\cup F)=v(F)$ for
every $E$ such that $v(E)=0$ and every $F$.
\end{defn}

\begin{lemma} \label{con_equi}Weak $v$-a.e.\ convergence coincides with strong
$v$-a.e.\ convergence iff $v$ is null-additive.
\end{lemma}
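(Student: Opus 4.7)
The plan is to verify both directions of the equivalence. The whole argument rests on decomposing an arbitrary $F\in\F$ according to its intersections with $\{x : f_n(x)\rightarrow f(x)\}$ and $\{x : f_n(x)\nrightarrow f(x)\}$.

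For the ``if'' direction, I would first note a free fact that holds with no assumption on $v$: strong $v$-a.e.\ convergence always implies weak $v$-a.e.\ convergence. Indeed, taking $F = \{x : f_n(x)\nrightarrow f(x)\}$ in the definition of strong convergence yields $v(F) = v(F\cap\{f_n\rightarrow f\}) = v(\emptyset) = 0$. Assuming now that $v$ is null-additive, I would derive strong from weak: for arbitrary $F\in\F$, write $F = (F\cap\{f_n\rightarrow f\})\cup(F\cap\{f_n\nrightarrow f\})$; by weak convergence and monotonicity the second piece has $v$-measure $0$, so null-additivity yields $v(F) = v(F\cap\{f_n\rightarrow f\})$, which is exactly strong convergence.

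For the ``only if'' direction, the idea is to construct, for every $E\in\F$ with $v(E)=0$, a sequence witnessing null-additivity. I would take the constant sequence $f_n\equiv\1_E$ and the limit $f\equiv 0$. Then $\{x : f_n(x)\nrightarrow f(x)\} = E$ has $v$-measure $0$, so $\{f_n\}_n$ converges weakly $v$-a.e.\ to $f$; by hypothesis the convergence is also strong, hence $v(G\cap E^c) = v(G)$ for every $G\in\F$. Specialising to $G = F$ gives $v(F\setminus E) = v(F)$, while specialising to $G = F\cup E$ (and using $(F\cup E)\cap E^c = F\setminus E$) gives $v(F\setminus E) = v(F\cup E)$. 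Comparing the two identities yields $v(F\cup E) = v(F)$, which is null-additivity.

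The main obstacle I anticipate is not computational but conceptual: choosing the right witness sequence in the ``only if'' direction. The constant sequence $\1_E$ works precisely because the set where it fails to converge to $0$ is $E$ itself, so the weak convergence hypothesis is automatic and one can transfer the content of null-additivity onto the strong convergence conclusion; once this observation is in hand, the rest is routine manipulation with monotonicity and the definitions.
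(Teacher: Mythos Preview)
Your proof is correct and follows essentially the same approach as the paper: the ``if'' direction is identical (decompose $F$ into its intersections with the convergence and non-convergence sets, then apply null-additivity), and for the ``only if'' direction both arguments use a constant sequence of indicator functions whose non-convergence set is a prescribed $v$-null set. The only cosmetic difference is that the paper argues by contrapositive (taking $f_n=\1_F$, $f=\1_{F\cup E}$ for a specific pair witnessing failure of null-additivity), whereas you argue directly (taking $f_n=\1_E$, $f=0$ and reading off $v(G\setminus E)=v(G)$ for all $G$); your route even yields the extra identity $v(F\setminus E)=v(F)$ along the way.
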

\begin{proof} Clearly, strong almost everywhere convergence implies
weak almost everywhere convergence.

Assume that $v$ is null-additive, that $\{f_n\}_n$ converges weakly
$v$-a.e.\ to $f$, and pick any $F\in\F$. Since $v$ is monotone,
$v(\{x\in F : f_n(x)\nrightarrow f(x)\})=0$.
\\Now, by null-additivity,  $$v(\{x\in F : f_n(x)\rightarrow
f(x)\})= v(\{x\in F : f_n(x)\rightarrow f(x)\}\cup \{x\in F :
f_n(x)\nrightarrow f(x)\})=v(F),$$ that is $\{f_n\}_n$ converges
strongly $v$-a.e.\ to $f$.

Conversely, assume that there exist $F,E\in\F$ such that $v(E)=0$
and $v(F\cup E)>v(F)$. Let $f=\1_{F\cup E}$ and $f_n=\1_F$ for all
$n$. Now, $v(\{x \in X : f_n(x)\neq f(x)\})=v(E)=0$ whereas
$v(\{x\in F\cup E : f_n (x)\rightarrow f(x)\})=v(F)<v(F\cup E).$
\end{proof}

\subsection{Monotone Convergence}
Li and Song \cite{Song} characterized capacities which satisfy
monotone Choquet integral convergence, when convergence of
sequences of functions is considered to be weak almost everywhere
convergence.

\begin{defn} A capacity $v$ is \emph{continuous from below} (resp. \emph{from above})
if $\lim_nv(F_n)=v(\bigcup_nF_n)$  (resp.
$\lim_nv(F_n)=v(\bigcap_nF_n)$) for every increasing (resp.
decreasing) sequence $F_1\subseteq F_2\subseteq \cdots $ (resp.
$F_1\supseteq F_2\supseteq \cdots $).
\end{defn}

\begin{theorem}[Li and Song]\label{choq_con} Let $v$ be a capacity. Then
$\lim_n\int^{Cho}_Xf_ndv=\int^{Cho}_Xfdv$ for any increasing
sequence $\{f_n\}_n$ converging weakly $v$-a.e.\ to $f$ iff $v$ is
null-additive and continuous from below.
\end{theorem}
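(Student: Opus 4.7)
The plan is to establish both directions of the equivalence. For necessity, I would use indicator functions as test sequences to extract continuity from below and null-additivity directly from the hypothesis. For sufficiency, I would work at the level of the Riemann integral $\int_0^\infty v(\{f\geq t\})\,dt$ that defines the Choquet integral, use null-additivity (via Lemma \ref{con_equi}) to upgrade weak a.e.\ convergence to strong a.e.\ convergence, and then exploit continuity from below to push the limit in $n$ through the level-set capacity before applying classical monotone convergence on $[0,\infty)$.

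For necessity, assume the integral convergence property holds. Given an increasing sequence $F_1\subseteq F_2\subseteq\cdots$ with union $F$, set $f_n=\1_{F_n}$ and $f=\1_F$. Then $f_n\uparrow f$ pointwise, hence weakly $v$-a.e., and since $\int^{Cho}_X\1_G\,dv=v(G)$ for any $G\in\F$, the hypothesis gives $v(F_n)\to v(F)$, proving continuity from below. For null-additivity, take $E$ with $v(E)=0$ and any $F\in\F$; the constant sequence $f_n\equiv\1_F$ is trivially increasing and converges weakly $v$-a.e.\ to $\1_{F\cup E}$ (they differ only on a subset of $E$), so the hypothesis yields $v(F)=v(F\cup E)$.

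For sufficiency, assume $v$ is null-additive and continuous from below, and let $f_n\uparrow f$ weakly $v$-a.e. Monotonicity of the Choquet integral gives $\lim_n\int^{Cho}_Xf_n\,dv\leq\int^{Cho}_Xf\,dv$, so only the reverse inequality needs work. Let $N=\{x:f_n(x)\nrightarrow f(x)\}$ and $A_t:=\bigcup_n\{f_n\geq t\}$ for each $t>0$. Clearly $A_t\subseteq\{f\geq t\}$, and for $x\notin N$ with $f(x)>t$ the convergence $f_n(x)\uparrow f(x)$ forces $x\in A_t$, so $\{f>t\}\subseteq A_t\cup N$. Null-additivity (together with monotonicity, which gives $v(N)=0$) then yields
$$v(\{f>t\})\;\leq\;v(A_t\cup N)\;=\;v(A_t)\;=\;\lim_n v(\{f_n\geq t\}),$$
the last equality coming from continuity from below applied to $\{f_n\geq t\}\uparrow A_t$. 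Since $t\mapsto v(\{f\geq t\})$ is monotone, it agrees with $t\mapsto v(\{f>t\})$ outside a countable set, so the two Riemann integrals on $[0,\infty)$ coincide. Combining and using the classical monotone convergence theorem for the increasing sequence $v(\{f_n\geq t\})$ of nonnegative functions of $t$ gives
$$\int^{Cho}_X f\,dv\;=\;\int_0^\infty v(\{f>t\})\,dt\;\leq\;\lim_n\int_0^\infty v(\{f_n\geq t\})\,dt\;=\;\lim_n\int^{Cho}_X f_n\,dv.$$

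The main delicate point is the sufficiency argument's handling of the exceptional set $N$: one must pass to $\{f>t\}$ (strict inequality) rather than $\{f\geq t\}$, because $f_n(x)\uparrow f(x)=t$ does not guarantee $f_n(x)\geq t$ for any finite $n$. Null-additivity then lets one absorb $N$ into $A_t$ without losing capacity, and the almost-everywhere equality of $v(\{f\geq t\})$ and $v(\{f>t\})$ in $t$ is what allows the estimate to be transferred back to the standard defining expression of the Choquet integral. Everything else is routine monotonicity and the Lebesgue monotone convergence theorem.
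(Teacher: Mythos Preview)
The paper does not prove this theorem; it is quoted from Li and Song \cite{Song}. The closest in-paper argument is the proof of Theorem~\ref{choq_strong_con} (the strong-a.e.\ version), which proceeds by approximating $f$ from below by a finite decreasing chain $\sum\lambda_k\1_{F_k}$ and then using Remark~\ref{rem app} to replace each $F_k$ by a slightly smaller $B_k$ on which $f_n$ is $\delta$-close to $f$. Your approach is genuinely different and in some ways cleaner: you stay at the level of the defining Riemann integral $\int_0^\infty v(\{f\ge t\})\,dt$, handle the null set $N$ set-theoretically via $\{f>t\}\subseteq A_t\cup N$, invoke null-additivity once, and then push the limit through with continuity from below and classical monotone convergence in $t$. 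This avoids the $\eps$--$\delta$ bookkeeping of the paper's argument.

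There is, however, one genuine slip. You twice assume implicitly that $f_n\le f$ everywhere: first when you write ``Clearly $A_t\subseteq\{f\ge t\}$,'' and second when you claim the upper bound $\lim_n\int^{Cho}f_n\,dv\le\int^{Cho}f\,dv$ follows from monotonicity of the Choquet integral. Neither holds in general, because on the exceptional set $N$ the pointwise limit $\sup_n f_n(x)$ need not be $\le f(x)$. The first claim is harmless since you never use it, but the second is your entire upper bound. The fix is the same trick you already used for the lower bound: from $f_n\le f$ on $N^c$ one gets $\{f_n\ge t\}\subseteq\{f\ge t\}\cup N$, hence by null-additivity $v(\{f_n\ge t\})\le v(\{f\ge t\}\cup N)=v(\{f\ge t\})$, and integrating in $t$ gives $\int^{Cho}f_n\,dv\le\int^{Cho}f\,dv$. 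With that patch the argument is complete.
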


The following theorem is a variant of the previous one, considering
strong almost everywhere convergence.

\begin{theorem}\label{choq_strong_con}Let $v$ be a capacity. Then
$\lim_n\int^{Cho}_Xf_ndv=\int^{Cho}_Xfdv$ for any increasing
sequence $\{f_n\}_n$ converging strongly $v$-a.e. to $f$ iff $v$ is
continuous from below.
\end{theorem}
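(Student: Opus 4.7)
The proof splits into two implications. The necessity direction is immediate: given an increasing family $F_1 \subseteq F_2 \subseteq \cdots$ with $F = \bigcup_n F_n$, set $f_n = \1_{F_n}$ and $f = \1_F$. Then $f_n \uparrow f$ pointwise on $X$, hence trivially strongly $v$-a.e., and the assumed integral convergence yields $v(F_n) = \int^{Cho}_X f_n dv \to \int^{Cho}_X f dv = v(F)$, so $v$ is continuous from below.

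For the sufficiency direction I will work with the layer-cake representation $\int^{Cho}_X g dv = \int_0^\infty v(\{g \geq t\}) dt$. Write $A^t_n = \{f_n \geq t\}$, $A^t = \{f \geq t\}$, and $C = \{x : f_n(x) \to f(x)\}$. Since $\{f_n\}$ is increasing, each $\{A^t_n\}_n$ is increasing in $n$ with $\bigcup_n A^t_n \subseteq A^t$, so continuity from below gives $\lim_n v(A^t_n) = v(\bigcup_n A^t_n) \leq v(A^t)$. For the matching lower bound, any $x \in \{f > t\} \cap C$ has $f_n(x) \to f(x) > t$, so $x \in A^t_n$ eventually, giving $\{f > t\} \cap C \subseteq \bigcup_n A^t_n$. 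Applying strong $v$-a.e.\ convergence with $F = \{f > t\}$ yields $v(\{f > t\}) = v(\{f > t\} \cap C) \leq \lim_n v(A^t_n) \leq v(A^t)$.

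Since $t \mapsto v(A^t)$ is non-increasing, its discontinuity set is at most countable, and a standard sandwich argument at continuity points (using $v(A^s) \leq v(\{f > t\}) \leq v(A^t)$ for $s > t$) gives $v(\{f > t\}) = v(A^t)$ for Lebesgue-almost every $t$. Hence $\lim_n v(A^t_n) = v(A^t)$ a.e., and because $v(A^t_n)$ is non-decreasing in $n$, the classical Lebesgue monotone convergence theorem yields $\int_0^\infty v(A^t_n) dt \to \int_0^\infty v(A^t) dt$, which is precisely $\lim_n \int^{Cho}_X f_n dv = \int^{Cho}_X f dv$. The crux of the argument, and the only place strong $v$-a.e.\ convergence enters, is the squeeze in the previous paragraph: applying the hypothesis to the specific set $F = \{f > t\}$ absorbs the exceptional set $X \setminus C$ level by level without any additive-type assumption on $v$, which is exactly what distinguishes this result from Li and Song's theorem (where weak $v$-a.e.\ convergence forces the extra null-additivity condition).
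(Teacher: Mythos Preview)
Your proof is correct and takes a genuinely different route from the paper's. The paper argues via the simple-function description of the Choquet integral: it fixes a decreasing chain $F_1 \supseteq \cdots \supseteq F_K$ with $\sum_k \lambda_k v(F_k)$ close to $\int^{Cho}_X f\,dv$, then invokes the key Remark that under continuity from below and strong $v$-a.e.\ convergence one has $v(\{x\in F : f(x)-f_n(x)<\delta\})>v(F)-\eps'$ for large $n$, applied to each $F_k$, and assembles an explicit $3\eps$ bound. You instead work directly with the layer-cake formula, showing $v(\{f>t\}) \leq \lim_n v(\{f_n\geq t\}) \leq v(\{f\geq t\})$ via strong convergence on the single set $\{f>t\}$, then use monotonicity of $t\mapsto v(\{f\geq t\})$ to squeeze at all but countably many levels and finish with the classical monotone convergence theorem on $[0,\infty)$. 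Your approach is cleaner and avoids the $\eps$-bookkeeping entirely; the paper's approach has the advantage that its intermediate Remark is a self-contained lemma reused later (for the concave integral and for induced capacities), and it makes the finite-chain structure of the Choquet integral explicit. Both isolate the role of strong convergence in exactly the same way: absorbing the exceptional set into each level set $\{f>t\}$ (you) or each $F_k$ (the paper) without needing null-additivity.
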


The essence of the proof is in the next remark.

\begin{remark}\label{rem app}
Assume that $\{f_n\}_n$ is an increasing sequence  converging
strongly  $v$-a.e.\ to a function $f$. That is, $v(\{x\in F :
f_n(x)\rightarrow f(x)\})=v(F)$, for every $F\in\F$. If $v$ is
continuous from below, then for every $F\in\mathcal F$, $\eps'>0$ and
$\delta>0$, there is $N\in\mathbb N$ such that for every $n>N$,
$v(\{x\in F : f(x)-f_n(x)<\delta\})>v(F)-\eps'$.
\end{remark}

\begin{proof}[Proof of Theorem \ref{choq_strong_con}] If $v$ is not continuous from below then there
exist a sequence of increasing sets $\{F_n\}$ such that $\lim_nv(F_n)<v \big(\bigcup_nF_n \big)$ . Since
$\int^{Cho}_X\1_F=v(F)$ for every $F\in\F$, we have that $\lim_n\int^{Cho}_X\1_{F_n}<\int^{Cho}_X\1_F$.

For the converse implication, assume  first that  $\int^{Cho}_Xfdv<\infty$.
Since $f_n\leq f$,  $\lim_n\int^{Cho}_Xf_ndv\leq\int^{Cho}_Xfdv$. We
will show that for every $\eps>0$, there exist  $M$ such that for
every $n\geq M$, $\int^{Cho}_Xf_ndv>\int^{Cho}_Xfdv-\eps$.

Fix $\eps>0$. There exist  $\sum_{k=1}^N \lambda_k\1_{F_k}\leq f$
such that $\{F_k\}_k$ is decreasing and
$$\int^{Cho}_X fdv-\sum_{k=1}^K \lambda_kv({F_k})<\eps.$$

Applying  Remark \ref{rem app} to $F=F_k$,
$\eps'={{\eps}\over{K\lambda_k}}$ and
$\delta={{\eps}\over{v(X)K}}$ ($k=1,..., K$) one obtains an $N_k$
and a set $B_k=\{x\in F_k : f(x)-f_n(x)<{{\eps}\over{v(X)K}}\}$
that satisfy $v(B_k)>v(F_k)-{{\eps}\over{K\lambda_k}}$  for every
 $n\geq N_k$. Moreover, since $\{F_k\}_k$ is
decreasing and $\delta$ is fixed, then $\{B_k\}_k$ is decreasing
as well. Set $M:=\max\{N_1,...,N_K\}$. Now, for every $n\geq M$ we
get
$$\int^{Cho}_X
f_ndv>\sum_{k=1}^K\bigg(\lambda_k-{{\eps}\over{v(X)K}}\bigg)v({B_k})\ge
\sum_{k=1}^K \lambda_kv(B_k)-\eps>$$$$ \sum_{k=1}^K
\lambda_k\bigg(v({F_k})-{{\eps}\over{K\lambda_k}}\bigg)-\eps>\int^{Cho}_X
fdv -3\eps.$$ Since $\eps$ is arbitrarily small, the result
follows.

Now, if $f$ is not integrable, that is $\int^{Cho}_Xfdv=\infty$,
given a large $L$, there exist $\sum_{k=1}^K \lambda_k\1_{F_k}\leq
f$ such that
$$\sum_{k=1}^K \lambda_kv({F_k})>L.$$
The proof from this point is similar to the previous one.
\end{proof}

The following is a monotone convergence theorem for sequences of
functions that converge pointwise.

\begin{theorem}\label{cho_point_conv}Let $v$ be a capacity. Then
$\lim_n\int^{Cho}_Xf_ndv=\int^{Cho}_Xfdv$ for any increasing
sequence $\{f_n\}_n$ converging pointwise to $f$ iff $v$ is
continuous from below.
\end{theorem}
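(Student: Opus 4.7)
The plan is to deduce this theorem almost immediately from Theorem \ref{choq_strong_con}, by observing that pointwise convergence is a strictly stronger notion than strong $v$-a.e.\ convergence for any capacity $v$. Indeed, if $f_n\to f$ pointwise on $X$, then for every $F\in\F$ the set $\{x\in F : f_n(x)\to f(x)\}$ equals $F$, so $v(\{x\in F : f_n(x)\to f(x)\})=v(F)$ trivially, which is exactly the definition of strong $v$-a.e.\ convergence.

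For the ``if'' direction, suppose $v$ is continuous from below and $\{f_n\}_n$ is an increasing sequence converging pointwise to $f$. By the observation above, $\{f_n\}_n$ converges strongly $v$-a.e.\ to $f$, so Theorem \ref{choq_strong_con} gives $\lim_n\int^{Cho}_Xf_n\,dv=\int^{Cho}_Xf\,dv$.

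For the ``only if'' direction, I would reuse the counterexample from the proof of Theorem \ref{choq_strong_con}: if $v$ fails to be continuous from below, pick an increasing sequence $F_1\subseteq F_2\subseteq\cdots$ with $\lim_n v(F_n)<v(\bigcup_n F_n)$. Set $f_n:=\1_{F_n}$ and $f:=\1_{\bigcup_n F_n}$. The sequence $\{f_n\}_n$ is increasing and converges to $f$ \emph{pointwise everywhere} on $X$ (not merely almost everywhere), yet $\int^{Cho}_Xf_n\,dv=v(F_n)$ and $\int^{Cho}_Xf\,dv=v(\bigcup_nF_n)$, so monotone convergence fails.

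I don't expect any real obstacle here: the only subtle point is checking that pointwise convergence really does imply strong a.e.\ convergence in the sense defined earlier, which is immediate from the definition, and that the same $\1_{F_n}$ counterexample used before happens to converge pointwise (which it does, since each $x$ either lies in some $F_{n_0}$, in which case $\1_{F_n}(x)=1$ for all $n\geq n_0$, or lies in no $F_n$, in which case $\1_{F_n}(x)=0$ for all $n$).
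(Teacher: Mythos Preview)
Your proof is correct. Both directions are handled cleanly: pointwise convergence trivially implies strong $v$-a.e.\ convergence (since the convergence set is all of $F$), so the ``if'' direction follows from Theorem~\ref{choq_strong_con}; and the indicator-function counterexample from that same theorem already converges pointwise, giving the ``only if'' direction.

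The paper takes a different route: it does not argue at all but simply cites Li and Song \cite{Song} for necessity and Murofushi and Sugeno \cite{Suge2} for sufficiency. Your approach is genuinely more economical in this context, because it derives Theorem~\ref{cho_point_conv} as an immediate corollary of Theorem~\ref{choq_strong_con}, which has just been proved in full within the paper. This keeps the argument self-contained and makes explicit the logical relationship between the two convergence notions, whereas the paper's proof relies on external results and leaves that relationship implicit.
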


\begin{proof}
Continuity from below is necessary by Li and Song \cite{Song},
and sufficient by Murofushi and Sugeno \cite{Suge2}.
\end{proof}

\subsection{The Concave Integral and Monotone
Convergence}\label{cav_con}

The concave integral (see Lehrer \cite{Lehrer1}) of $f\in\mathcal
M$ w.r.t.\ a capacity $v$ is defined by
$$\int^{Cav}_Xfdv:=\sup\bigg\{\sum_{i=1}^N\lambda_i v(F_i):
 \sum_{i=1}^N\lambda_i \1_{F_i}\leq f\textrm{ and }F_i\in\mathcal F,\lambda_i\geq 0,N\in\mathbb N\bigg\}.$$
Clearly, $\int^{Cav}_Xfdv\geq \int^{Cho}_Xfdv$. It is shown in
Lehrer and Teper \cite {Lehrer3} that the concave integral coincides
 with the Choquet integral iff the capacity $v$ is convex.

The concave integral w.r.t.\ a capacity $v$ induced a
\emph{totally balanced cover} $\hat{v}$ over $\F$, which is a capacity itself.
The totaly balanced cover is defined by
$$\hat{v}(F):=\int^{Cav}_X\1_Fdv,    \textrm {  }\textrm {  }\textrm {  }\textrm {  for every }F\in\F.$$
The following lemma states  that in the view of the concave integral, all capacities are a totally balanced cover.
\begin{lemma}[Lehrer and Teper \cite{Lehrer3}]$\int^{Cav}_Xfdv=\int^{Cav}_Xfd\hat{v}$ for every $f\in\M$.
\end{lemma}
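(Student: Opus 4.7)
The plan is to prove the two inequalities $\int^{Cav}_X f d v \le \int^{Cav}_X f d\hat v$ and $\int^{Cav}_X f d\hat v \le \int^{Cav}_X f d v$ separately, the first being essentially free and the second being a ``refinement'' argument.

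First, I would observe that $\hat v \ge v$ pointwise on $\F$. Indeed, for any $F \in \F$ the single-term decomposition $1\cdot \1_F \le \1_F$ is admissible in the supremum defining $\int^{Cav}_X\1_F d v$, so $\hat v(F) \ge v(F)$. Consequently any finite decomposition $\sum_i \lambda_i \1_{F_i} \le f$ satisfies $\sum_i \lambda_i v(F_i) \le \sum_i \lambda_i \hat v(F_i)$, and taking suprema on both sides yields $\int^{Cav}_X f d v \le \int^{Cav}_X f d\hat v$. (This also needs the easy verification that $\hat v$ is a capacity: $\hat v(\emptyset)=0$ and monotonicity is immediate from the definition of the concave integral.)

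For the reverse inequality, fix $\eps>0$ and pick a decomposition $\sum_{i=1}^N \lambda_i \1_{F_i} \le f$ with $\sum_{i=1}^N \lambda_i \hat v(F_i) > \int^{Cav}_X f d\hat v - \eps$ (assuming finiteness for the moment). Since $\hat v(F_i) = \int^{Cav}_X \1_{F_i} d v$, for each $i$ with $\lambda_i>0$ I can choose an inner decomposition $\sum_{j=1}^{M_i} \mu_{ij} \1_{G_{ij}} \le \1_{F_i}$ with $\sum_{j} \mu_{ij} v(G_{ij}) > \hat v(F_i) - \eps/(N\lambda_i)$. Concatenating them gives
$$\sum_{i=1}^N \sum_{j=1}^{M_i} \lambda_i \mu_{ij} \1_{G_{ij}} \le \sum_{i=1}^N \lambda_i \1_{F_i} \le f,$$
which is an admissible decomposition in the supremum defining $\int^{Cav}_X f d v$, and therefore
$$\int^{Cav}_X f d v \ \ge\ \sum_{i=1}^N \lambda_i \sum_{j=1}^{M_i} \mu_{ij} v(G_{ij}) \ >\ \sum_{i=1}^N \lambda_i \hat v(F_i) - \eps \ >\ \int^{Cav}_X f d\hat v - 2\eps.$$
Letting $\eps\to 0$ gives the desired inequality.

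The only remaining point is the case $\int^{Cav}_X f d\hat v = \infty$; here one replaces ``$>\int^{Cav}_X f d\hat v -\eps$'' by ``$>L$'' for arbitrary $L$, and the same refinement argument produces decompositions for $v$ with sums exceeding $L - \eps$, showing $\int^{Cav}_X f d v = \infty$ as well. The main (and really only) subtlety is bookkeeping: keeping the per-index error budget $\eps/(N\lambda_i)$ aligned with the weights so that the concatenated decomposition loses at most $O(\eps)$; no deeper property of $v$ or $\hat v$ beyond the definition of the concave integral is needed.
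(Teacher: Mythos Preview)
Your argument is correct. The two-step refinement --- first using $\hat v \ge v$ for one inequality, then unpacking each $\hat v(F_i)$ into an inner decomposition $\sum_j \mu_{ij}\1_{G_{ij}}\le \1_{F_i}$ and concatenating --- is exactly the natural proof, and the bookkeeping with the error budget $\eps/(N\lambda_i)$ is handled properly (including the case $\lambda_i=0$ and the infinite-integral case).

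As for comparison with the paper: there is nothing to compare against. The paper does not prove this lemma; it merely quotes it from Lehrer and Teper \cite{Lehrer3} and moves on. Your write-up is therefore a self-contained proof where the paper offers none, and it is the standard one.
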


We now formulate monotone convergence theorems for the concave integral.
\begin{theorem}[Lehrer and Teper \cite{Lehrer3}]\label{conc_con}
$\lim_n\int^{Cav}_Xf_ndv=\int^{Cav}_Xfdv$ for any increasing
sequence $\{f_n\}_n \subset \mathcal M$ converging strongly
$v$-a.e. to $f$ iff $\hat{v}$ is continuous from below.
\end{theorem}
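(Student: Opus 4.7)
My plan is to reduce the claim to its analog for $\hat{v}$ via the lemma $\int^{Cav}_X g dv=\int^{Cav}_X g d\hat{v}$, and then adapt the witness construction in the proof of Theorem \ref{choq_strong_con} to concave-integral witnesses whose supports need not form a decreasing chain. Two auxiliary facts drive the argument: first, strong $v$-a.e.\ convergence implies strong $\hat{v}$-a.e.\ convergence; second, under the hypothesis that $\hat{v}$ is continuous from below, Remark \ref{rem app} applies to $\hat{v}$.

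For the first auxiliary fact, let $B=\{x:f_n(x)\to f(x)\}$ and fix $F\in\F$. Take any witness $\sum_k\lambda_k\1_{F_k}\leq\1_F$ in $\hat{v}(F)=\int^{Cav}_X\1_F dv$. Evaluating the inequality at $x\notin F$ forces $F_k\subseteq F$ whenever $\lambda_k>0$, so strong $v$-a.e.\ convergence gives $v(B\cap F_k)=v(F_k)$, while $\sum_k\lambda_k\1_{B\cap F_k}\leq\1_{B\cap F}$. Hence the same coefficients on $B\cap F_k$ form a legitimate witness for $\hat{v}(B\cap F)$ with identical value $\sum_k\lambda_k v(F_k)$. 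Passing to the supremum and using monotonicity yields $\hat{v}(B\cap F)=\hat{v}(F)$.

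For sufficiency, I fix $\eps>0$ and select a witness $\sum_{k=1}^K\lambda_k\1_{F_k}\leq f$ with $\sum_k\lambda_k\hat{v}(F_k)>\int^{Cav}_X f d\hat{v}-\eps$. For small $\delta,\eps'>0$, Remark \ref{rem app} applied to $\hat{v}$ produces $N$ such that $B_k^{(n)}=\{x\in F_k:f(x)-f_n(x)<\delta\}$ satisfies $\hat{v}(B_k^{(n)})>\hat{v}(F_k)-\eps'$ for every $k=1,\ldots,K$ and every $n\geq N$. The key step is choosing the truncated coefficients $\mu_k=(\lambda_k-\delta)^+$: at any $x$, either no $k$ with $x\in B_k^{(n)}$ has $\lambda_k\geq\delta$, in which case $\sum_k\mu_k\1_{B_k^{(n)}}(x)=0$, or at least one $\mu_k$ contributes a full $\delta$ subtraction, giving
\[
\sum_k\mu_k\1_{B_k^{(n)}}(x)\leq\sum_{k:x\in B_k^{(n)}}\lambda_k-\delta\leq\sum_{k:x\in F_k}\lambda_k-\delta\leq f(x)-\delta<f_n(x),
\]
since $B_k^{(n)}\subseteq F_k$ and $f<f_n+\delta$ on $B_k^{(n)}$. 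Thus $\sum_k\mu_k\1_{B_k^{(n)}}\leq f_n$ is a valid concave-integral witness, and routine bookkeeping yields $\sum_k\mu_k\hat{v}(B_k^{(n)})\geq\sum_k\lambda_k\hat{v}(F_k)-2\delta K\hat{v}(X)-\eps'\sum_k\lambda_k$; choosing $\delta,\eps'$ small enough then gives $\int^{Cav}_X f_n d\hat{v}>\int^{Cav}_X f d\hat{v}-C\eps$ for $n\geq N$. The lemma transfers the result back to $v$, and the unbounded case $\int^{Cav}_X f dv=\infty$ is handled exactly as in Theorem \ref{choq_strong_con}.

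For necessity, if $\hat{v}$ fails continuity from below, pick an increasing $\{F_n\}_n$ with $\lim_n\hat{v}(F_n)<\hat{v}(\bigcup_nF_n)$; setting $f_n=\1_{F_n}$ produces an increasing sequence converging pointwise, hence strongly $v$-a.e.\ for any capacity $v$, whose concave integrals $\int^{Cav}_X f_n dv=\hat{v}(F_n)$ cannot converge to $\hat{v}(\bigcup_nF_n)=\int^{Cav}_X\1_{\bigcup_nF_n}dv$. The main obstacle I anticipate is the concave-integral witness construction in the sufficiency part: unlike in Theorem \ref{choq_strong_con}, the $F_k$ need not be decreasing and one cannot directly transplant the decreasing-chain witness; the truncation $\mu_k=(\lambda_k-\delta)^+$ is the device that absorbs the approximation error $f-f_n<\delta$ on $B_k^{(n)}$ regardless of how the $F_k$ overlap.
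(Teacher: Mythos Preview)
The paper does not supply its own proof of this theorem: it is stated with attribution to Lehrer and Teper \cite{Lehrer3} and immediately followed by the next result. There is thus no in-paper proof to compare against. That said, your argument is correct and is precisely the natural adaptation the paper implicitly points to---its proof of Theorem~\ref{choq_strong_con} together with the remark following Theorem~\ref{con_weak_conv} (which records the $\hat v$-version of Remark~\ref{rem app}) already outline the skeleton you execute.

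What you add is worth noting. The paper's Choquet argument relies on the witness sets $\{F_k\}$ being a decreasing chain, so the approximating sets $B_k$ inherit that structure and the subtraction of a uniform $\delta$ from each coefficient automatically yields a function below $f_n$. For the concave integral the $F_k$ need not be nested, and your truncation $\mu_k=(\lambda_k-\delta)^+$ together with the case split (``either no active coefficient exceeds $\delta$, or at least one full $\delta$ is subtracted'') is exactly the right device: it guarantees $\sum_k\mu_k\1_{B_k^{(n)}}\le f_n$ pointwise regardless of how the $B_k^{(n)}$ overlap, at the cost of the extra error term $\delta K\hat v(X)$ you account for. Your preliminary step, that strong $v$-a.e.\ convergence transfers to strong $\hat v$-a.e.\ convergence via the witness characterisation of $\hat v$, is also correct and is needed to invoke Remark~\ref{rem app} for $\hat v$. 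The necessity direction via indicator functions of an increasing family witnessing failure of continuity from below is standard and correct.
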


\begin{theorem}\label{con_weak_conv}
$\lim_n\int^{Cav}_Xf_ndv=\int^{Cav}_Xfdv$ for any increasing
sequence $\{f_n\}_n \subset \mathcal M$ converging weakly $v$-a.e.
to $f$ iff $\hat{v}$ is null-additive and continuous from below.
\end{theorem}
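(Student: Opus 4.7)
The plan is to reduce the statement to Theorem \ref{conc_con} by passing from $v$ to its totally balanced cover $\hat v$. Two preliminary observations enable the reduction. First, the identity $\int^{Cav}_X f dv = \int^{Cav}_X f d\hat v$ (the Lehrer--Teper lemma above), applied with $f = \1_F$, yields $\hat{\hat v} = \hat v$, so Theorem \ref{conc_con} applied to the capacity $\hat v$ reads: strong $\hat v$-a.e.\ monotone convergence of $\int^{Cav}\cdot\,d\hat v$ holds iff $\hat v$ is continuous from below. Second, $v$ and $\hat v$ share the same null sets: since $v\le \hat v$ one direction is trivial, and if $v(F)=0$ then any decomposition $\sum \lambda_i \1_{F_i}\le \1_F$ with $\lambda_i>0$ forces $F_i\subseteq F$ (evaluate at $x\notin F$), hence $v(F_i)=0$ and $\hat v(F)=0$. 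Consequently weak $v$-a.e.\ convergence and weak $\hat v$-a.e.\ convergence are the same notion.

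For sufficiency, assume $\hat v$ is null-additive and continuous from below, and let $\{f_n\}$ be an increasing sequence converging weakly $v$-a.e.\ to $f$. By the null-set coincidence, $\{f_n\}$ converges weakly $\hat v$-a.e.\ to $f$, and Lemma \ref{con_equi} applied to $\hat v$ upgrades this to strong $\hat v$-a.e.\ convergence. Theorem \ref{conc_con} applied to $\hat v$ then gives $\int^{Cav}_X f_n d\hat v \to \int^{Cav}_X f d\hat v$, which by the integral identity is exactly $\int^{Cav}_X f_n dv \to \int^{Cav}_X f dv$.

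For necessity, continuity from below of $\hat v$ follows from Theorem \ref{conc_con} once I observe that strong $v$-a.e.\ convergence always implies weak $v$-a.e.\ convergence (take $F=\{f_n\nrightarrow f\}$ in the strong definition, forcing $v(F)=v(\emptyset)=0$); hence monotone convergence under the weaker weak-a.e.\ hypothesis a fortiori gives it under the stronger strong-a.e.\ hypothesis, and Theorem \ref{conc_con} yields continuity from below. Null-additivity of $\hat v$ I would extract from a test sequence in the spirit of the counterexample in Lemma \ref{con_equi}: supposing $\hat v(E)=0$ but $\hat v(F\cup E)>\hat v(F)$, the constant increasing sequence $f_n\equiv \1_F$ with limit $f=\1_{F\cup E}$ converges weakly $v$-a.e.\ to $f$ because $\{f_n\nrightarrow f\}\subseteq E$ and $v(E)\le \hat v(E)=0$; the monotone convergence hypothesis would then force $\hat v(F)=\hat v(F\cup E)$, a contradiction.

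The main point requiring care is the equality of null sets between $v$ and $\hat v$, since without it the reduction via $\hat v$ to Theorem \ref{conc_con} would not go through cleanly; once that observation is in hand the remainder is just a chaining of the Lehrer--Teper integral identity, Lemma \ref{con_equi}, and Theorem \ref{conc_con}.
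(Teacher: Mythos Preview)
Your proof is correct and follows essentially the same route as the paper: reduce to Theorem~\ref{conc_con} via Lemma~\ref{con_equi} applied to $\hat v$. The paper's own argument is terse (``the `only if' implication is clear; by Lemma~\ref{con_equi} the sequence converges strongly $\hat v$-a.e., therefore by Theorem~\ref{conc_con} we obtain the `if' implication''), and you have made explicit the two points it uses silently---that $v$ and $\hat v$ share null sets (so weak $v$-a.e.\ equals weak $\hat v$-a.e.) and that $\hat{\hat v}=\hat v$ (so Theorem~\ref{conc_con} transfers to $\hat v$ via the integral identity).
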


\begin{proof}The `only if' implication is clear. By Lemma \ref{con_equi} we have
that $\{f_n\}_n$ converges strongly $\hat{v}$-a.e. to $f$, therefore by
Theorem \ref{conc_con} we obtain the `if' implication.
\end{proof}

\begin{theorem}\label{con_weak_conv}
$\lim_n\int^{Cav}_Xf_ndv=\int^{Cav}_Xfdv$ for any increasing
sequence $\{f_n\}_n \subset \mathcal M$ converging pointwise to
$f$ iff $\hat{v}$ is and continuous from below.
\end{theorem}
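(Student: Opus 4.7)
The plan is to reduce this theorem to Theorem \ref{conc_con}, by observing that pointwise convergence is trivially a special case of strong $v$-a.e.\ convergence, \emph{regardless} of the capacity $v$.

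For the `if' direction, suppose $\hat{v}$ is continuous from below and that $\{f_n\}_n$ is an increasing sequence converging pointwise to $f$. Then for every $F\in\F$, the set $\{x\in F : f_n(x)\rightarrow f(x)\}$ is all of $F$, so trivially $v(\{x\in F : f_n(x)\rightarrow f(x)\})=v(F)$. Hence $\{f_n\}_n$ converges strongly $v$-a.e.\ to $f$, and Theorem \ref{conc_con} delivers $\lim_n\int^{Cav}_X f_n dv = \int^{Cav}_X f dv$ directly.

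For the `only if' direction, I would specialize the hypothesis to indicator functions. Given any increasing sequence $F_1\subseteq F_2\subseteq \cdots$ in $\F$ with union $F$, the sequence $\{\1_{F_n}\}_n$ is increasing in $\M$ and converges pointwise to $\1_F$. By definition of the totally balanced cover, $\int^{Cav}_X \1_E dv = \hat{v}(E)$ for every $E\in\F$, so the assumed integral convergence becomes $\hat{v}(F_n)\to \hat{v}(F)=\hat{v}\bigl(\bigcup_n F_n\bigr)$, which is exactly continuity from below of $\hat{v}$.

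There is essentially no technical obstacle here; the only subtlety is recognizing that the conclusion of Theorem \ref{conc_con} applies without having to invoke any null-additivity hypothesis, because pointwise convergence bypasses the distinction between weak and strong a.e.\ convergence entirely. (Note also the evident typo in the statement, `$\hat{v}$ is and continuous from below', which should read `$\hat{v}$ is continuous from below'.)
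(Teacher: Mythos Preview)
Your proof is correct, and it takes a different (and cleaner) route than the paper's. The paper does not reduce to Theorem~\ref{conc_con}; instead it indicates that one should rerun the $\eps$-approximation argument of Theorem~\ref{choq_strong_con} from scratch, replacing Remark~\ref{rem app} by its $\hat v$-analogue: if $\hat v$ is continuous from below and $f_n\uparrow f$ pointwise, then for every $F\in\F$, $\eps'>0$, $\delta>0$ there is $N$ with $\hat v(\{x\in F : f(x)-f_n(x)<\delta\})>\hat v(F)-\eps'$ for all $n>N$. From there the paper's proof proceeds via the same decreasing-chain decomposition used for the Choquet case.

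Your reduction is more economical: the observation that pointwise convergence trivially implies strong $v$-a.e.\ convergence lets you invoke Theorem~\ref{conc_con} as a black box for the `if' direction, and your `only if' via indicators of an increasing chain is the standard move (the same one implicit in the paper's treatment of Theorem~\ref{choq_strong_con}). The paper's route has the minor advantage of being self-contained (not relying on the cited result of Lehrer--Teper), but your argument is shorter and exposes the logical dependence between the theorems more transparently.
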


\begin{remark}
The proof is similar to that of Theorem \ref{choq_strong_con}.
Instead of applying Remark \ref{rem app}, the reader should apply
the following argument: let $\{f_n\}_n$ be an increasing sequence
converging pointwise to a function $f$. If $\hat{v}$ is continuous
from below, then for every $F\in\mathcal F, \eps'>0$ and
$\delta>0$ there is $N\in\mathbb N$ such that for every $n>N$,
$\hat{v}(\{x\in F : f(x)-f_n(x)<\delta\})>\hat{v}(F)-\eps'$.
\end{remark}

\section{Sub-$\sigma$-algebra and the induced capacity} Let
$(X,\mathcal F,P)$ be a probability  space. A sub-$\sigma$-algebra
$\A\subseteq \F$ induces a convex capacity over $\F$  (see Lehrer
\cite{Lehrer2}) by
$$ v_{\mathcal A}(F)=\max\{P(A) : A\in\mathcal A,
A\subseteq F\},$$ for every $F\in\F$. $\A$ is a $\sigma$-algebra
therefore the maximum is attained and $\VA$ is well defined. We
denote by $A_F=\arg\max\{P(A) : A\in\mathcal A, A\subseteq F\}$
the $\mathcal A$-measurable set (modulo a set of probability $0$)
at which the maximum is attained. We say that $\VA$ is the
\emph{induced capacity} by $\A$.

\begin{remark}Since the induced capacity is convex, the Choquet and concave integral
w.r.t.\ it coincide. From this point forward, unless stated otherwise, the integral
w.r.t.\ the induced  capacity could be interpreted both as a Choquet and concave integral,
and the ``Cho'' and ``Cav'' notation is
therefore omitted.
\end{remark}

We now present the main interest of this paper.  The structure of a sub-
$\sigma$-algebra could be varied to induce capacities with
different properties. Now, assume that an
increasing sequence of measurable nonnegative functions $\{f_n\}_n
\subset \mathcal M$ converges in a certain way to a function $f$.
We would like to address the following questions:
\begin{itemize}\item  Does $\lim_n\int_Xf_nd\VA=\int_Xfd\VA$?
\item How to characterize the structure of a sub-$\sigma$-algebra
$\mathcal A$ which would yield such convergent sequence of
integrals? \item In what sense should sequences of functions
converge to obtain convergence of the integrals?
\end{itemize}

\begin{lemma}\label{null_2_cont} $\VA$  is continuous from
above.
\end{lemma}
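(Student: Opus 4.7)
The plan is to reduce the claim to continuity from above of the underlying probability measure $P$ on the sub-$\sigma$-algebra $\A$. Let $\{F_n\}_n \subseteq \F$ be a decreasing sequence and let $F = \bigcap_n F_n$. Monotonicity of $\VA$ immediately gives $\VA(F) \le \lim_n \VA(F_n)$ (the limit exists because $\VA(F_n)$ is a decreasing sequence bounded below by $0$). So the real content is the reverse inequality $\VA(F) \ge \lim_n \VA(F_n)$.

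For each $n$, fix a maximizer $A_{F_n} \in \A$ with $A_{F_n} \subseteq F_n$ and $P(A_{F_n}) = \VA(F_n)$. The key step is to show that these maximizers can be chosen so that $A_{F_{n+1}} \subseteq A_{F_n}$ for all $n$. For this I would use essential uniqueness of the maximizer: if $B \in \A$ with $B \subseteq F_n$, then $A_{F_n} \cup B \in \A$ is also an $\A$-subset of $F_n$, and by maximality $P(A_{F_n} \cup B) \le P(A_{F_n})$, forcing $P(B \setminus A_{F_n}) = 0$. Applying this with $B = A_{F_{n+1}}$ (legitimate since $A_{F_{n+1}} \subseteq F_{n+1} \subseteq F_n$) shows $A_{F_{n+1}} \subseteq A_{F_n}$ modulo a $P$-null set. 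Hence, replacing $A_{F_n}$ by the intersection $A_{F_1} \cap \cdots \cap A_{F_n}$, we obtain a genuinely decreasing sequence of $\A$-sets, still contained in $F_n$, and still of $P$-measure $\VA(F_n)$.

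Now set $A^\star := \bigcap_n A_{F_n}$. Since $\A$ is a $\sigma$-algebra, $A^\star \in \A$, and clearly $A^\star \subseteq \bigcap_n F_n = F$. Continuity from above of the probability measure $P$ applied to the decreasing sequence $\{A_{F_n}\}_n$ yields
$$P(A^\star) \;=\; \lim_n P(A_{F_n}) \;=\; \lim_n \VA(F_n).$$
Since $A^\star$ is an admissible competitor in the definition of $\VA(F)$, we get $\VA(F) \ge P(A^\star) = \lim_n \VA(F_n)$, which combined with the opposite inequality above finishes the proof.

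The main obstacle is the third sentence of the second paragraph: arranging the maximizers to be nested. Once that essential-uniqueness argument is in place, the rest is just quoting continuity from above of $P$ and using that $\A$ is closed under countable intersections. A minor technical point is that the modification by null sets does not disturb the relation $A_{F_n} \subseteq F_n$ (we may need to intersect with the original $A_{F_n}$'s to preserve this), but this is routine and does not change any probabilities.
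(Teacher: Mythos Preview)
Your proof is correct and follows essentially the same route as the paper: both arguments identify $\bigcap_n A_{F_n}$ as an $\A$-set contained in $F$ with $P$-measure $\lim_n \VA(F_n)$, then invoke continuity from above of $P$. You are simply more explicit than the paper about the null-set bookkeeping needed to make the $A_{F_n}$'s genuinely nested, a point the paper handles tersely by working ``modulo a set of probability~$0$.''
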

\begin{proof}Assume that  $\{F_n\}_n$
is decreasing to $F$. Obviously, $A_F\subseteq A_{F_n}$ for every
$n$, therefore $A_F\subseteq \bigcap_n A_{F_n}$. Assume that
$\emptyset\neq A'= \bigcap_n A_{F_n}\setminus A_F\in\mathcal A$. In
particular, $A'\subseteq A_{F_n}$ for every $n$, therefore
$A'\subseteq A_F$, a contradiction.
\end{proof}

Continuity from below, which cannot be obtained for every induced
capacity,  plays a key property in integral convergence and will be
discussed in detail in Section \ref{sect_natom}.

\section{Motivation: Decision Making Under Uncertainty}
 Expected utility
is a customary theory  to analyze the behavior of a decision maker
(DM), where her preference order is described by the Lebesgue
integral. The Ellsberg paradox \cite{Ell} demonstrates a situation
where expected utility theory is violated due to partial
information that the DM obtains on the underlying probability. In
a recent paper,   Lehrer \cite{Lehrer2} suggests a new approach to
this issue. According to Lehrer, the preference order is given by
a new integral which utilizes only the information obtained by the
DM and ignores completely unavailable  information.

More formally, given a probability space $(X,\mathcal F,P)$ we
describe the information obtained by the DM by a
sub-$\sigma$-algebra $\A\subseteq \F$. The restriction of the
probability $P$ to $\A$, denoted by $\PA$, is called \emph{a
probability specified on a sub-algebra (PSA)}. The integral
w.r.t.\ a PSA $\PA$ of an $\mathcal F$-measurable nonnegative
function $f\in\mathcal M$  is defined by
$$\int_Xfd\PA=\sup\bigg\{\sum_{i=1}^N\lambda_i P(A_i):
 \sum_{i=1}^N\lambda_i \1_{A_i}\leq f\textrm{ and }A_i\in\mathcal A,\lambda_i\geq 0,N\in\mathbb N\bigg\}.$$

The next Lemma relates the integral w.r.t.\ a PSA to the induced
capacity by a sub-$\sigma$-algebra.

\begin{lemma} 1. $\VA(F)=\int_X\1_Fd\PA$ for all $F\in\F$.

2.\label{PSP_2_cap} $\int_XfdP_{\mathcal A}=\int_Xfd\VA$ for all
$f\in\mathcal M$.

\end{lemma}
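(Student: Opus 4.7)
The plan is to deduce part 1 as the special case $f=\1_F$ of part 2, since $\int_X\1_F\,d\VA=\VA(F)$ directly from the Choquet definition (and the Choquet and concave integrals agree on the convex capacity $\VA$). The main work is therefore part 2, which I would prove by establishing two inequalities. The key identity behind both directions is that $\VA(A)=P(A)$ whenever $A\in\A$, since $A$ is then its own maximal $\A$-measurable subset.

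For $\int_X f\,dP_{\mathcal A}\leq\int_X f\,d\VA$: I would take any admissible PSA decomposition $\sum_{i=1}^N\lambda_i\1_{A_i}\leq f$ with $A_i\in\A$ and observe that it is also admissible in the concave-integral formulation of $\int_X f\,d\VA$, because $\A\subseteq\F$. By the identity $\VA(A_i)=P(A_i)$, one has $\sum\lambda_i P(A_i)=\sum\lambda_i\VA(A_i)\leq\int_X f\,d\VA$. Taking the supremum over all such decompositions yields the inequality.

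For the reverse inequality $\int_X f\,d\VA\leq\int_X f\,dP_{\mathcal A}$: given any $\F$-decomposition $\sum_{i=1}^N\lambda_i\1_{F_i}\leq f$ with $F_i\in\F$, I would lift it to an admissible PSA decomposition by replacing each $F_i$ with the $\A$-measurable maximizer $A_{F_i}$ from the definition of $\VA(F_i)$. Since $A_{F_i}\subseteq F_i$, we have $\sum\lambda_i\1_{A_{F_i}}\leq\sum\lambda_i\1_{F_i}\leq f$, so the lifted decomposition is admissible for the PSA integral; and $P(A_{F_i})=\VA(F_i)$ by construction, whence $\sum\lambda_i\VA(F_i)=\sum\lambda_i P(A_{F_i})\leq\int_X f\,dP_{\mathcal A}$. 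Taking the supremum over $\F$-decompositions closes the gap.

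The only subtle point is precisely this lifting: the concave integral w.r.t.\ $\VA$ admits all $\F$-measurable indicators, whereas the PSA integral only admits $\A$-measurable ones. The existence of a maximizer $A_{F_i}\in\A$ (which relies on $\A$ being a $\sigma$-algebra, as noted in the definition of $\VA$) is exactly what bridges the two sides, and once it is in hand part 1 drops out immediately by specializing to $f=\1_F$.
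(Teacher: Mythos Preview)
Your proposal is correct and follows essentially the same route as the paper: both directions rest on the identity $\VA(A)=P(A)$ for $A\in\A$ and on the lifting $F_i\mapsto A_{F_i}$ to pass from $\F$-decompositions to $\A$-decompositions. The only cosmetic differences are that the paper proves part 1 directly (rather than as a corollary of part 2) and phrases part 2 via $\eps$-approximations with a separate non-integrable case, whereas your direct supremum argument handles finite and infinite values uniformly.
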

\begin{proof} $1$ is straight forward. As for $2$, assume that $f$ is $\VA$-integrable and $\PA$-integrable.
Fix $\eps>0$. There exists $\sum_{n=1}^N\lambda_n\1_{F_n}\leq f$
such that $\int_Xfdv_{\mathcal A}\leq\sum_{n=1}^N\lambda_n
v_{\mathcal A}(F_n)+\eps$. Now,
$$\int_Xfdv_{\mathcal A}\leq \sum_{n=1}^N \lambda_nv_{\mathcal A}(F_n)+\eps=\sum _{n=1}^N \lambda_n P(A_{F_n})+\eps
 \leq\int_Xfd\PA+\eps.$$ In the same manner, there exist
$\sum_{n=1}^N\lambda_n\1_{A_n}\leq f$ such that
$\int_Xfd\PA\leq\sum_{n=1}^N\lambda_nP(A_n)+\eps$.
$$\int_Xfd\PA\leq
\sum_{n=1}^N\lambda_nP(A_n)+\eps=\sum_{n=1}^N\lambda_nv_{\mathcal
A}(A_n)+\eps\leq \int_Xfdv_{\mathcal A}+\eps.$$ Since $\eps$ is
arbitrarily small we obtain the expected result.

If, for example, $f$ is not $v_{\mathcal A}$-integrable, then for
every $L>0$ there exist $\sum_{n=1}^N\lambda_n\1_{F_n}\leq f$ such
that $\sum_{n=1}^N\lambda_n v_{\mathcal A}(F_n)>L$. The proof that
$f$ is not $P_\mathcal A$-integrable is similar to the one above.
\end{proof}

By Lemma \ref{PSP_2_cap} we can interpret the  integral w.r.t.\ a
PSA  as the Choquet integral w.r.t.\ to the induced capacity.

\section{Non-Atomic Probability Spaces}\label{sect_natom}

In this section we consider non-atomic probability
spaces.\footnote{$F\in\F$ is an atom if $P(F)>0$ and for every
$G\in\F$ such that $G\subseteq F$, $P(G)=P(F)$ or $P(G)=0$. A
probability space is non-atomic if there are no atoms.} Discrete
probability spaces will be discussed later in section
\ref{sect_atom}.

\subsection{Weak Almost Everywhere Convergence} When considering
weak almost everywhere convergence, then Theorem \ref{choq_con} (and
Theorem \ref{con_weak_conv}) states that integral (Choquet and
concave) monotone convergence is equivalent to null-additivity and
continuity from below.
\begin{lemma}\label{null_2_cont}If $\VA$ is null-additive then it is continuous from
below.
\end{lemma}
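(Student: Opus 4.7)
The plan is to reduce continuity from below to continuity from above (already established for every induced capacity), using null-additivity as the bridge. Fix an increasing sequence $F_n \uparrow F$ in $\F$. I would first choose the maximizing $\A$-sets $A_n := A_{F_n}$ to be literally nested: replacing $A_n$ by $\bigcup_{k\le n} A_k$ yields an element of $\A$ contained in $F_n$ whose $P$-measure still equals $\VA(F_n)$ (bounded below by monotonicity and above by maximality of $A_n$). Set $A := \bigcup_n A_n \in \A$; countable additivity of $P$ gives $\lim_n \VA(F_n) = P(A)$, and $A \subseteq F$ already yields $P(A) \le \VA(F)$.

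For the reverse inequality, let $B := A_F$, so $\VA(F) = P(B)$. Replacing $B$ by $B \cup A$ does not change $P(B)$, by maximality, so we may assume $A \subseteq B$. The problem then reduces to showing $P(E) = 0$ for the $\A$-measurable set $E := B \setminus A$. The central observation is that $\VA(E \cap F_n) = 0$ for every $n$: any $\A$-subset $C \subseteq E \cap F_n$ is disjoint from $A$, hence from $A_n$, so $A_n \cup C$ is an $\A$-subset of $F_n$, and maximality of $A_n$ forces $P(C) = 0$.

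Now null-additivity allows me to pivot: since $\VA(E \cap F_n) = 0$,
$$\VA(E) \;=\; \VA\bigl((E \cap F_n) \cup (E \setminus F_n)\bigr) \;=\; \VA(E \setminus F_n)$$
for every $n$. Because $E \subseteq B \subseteq F$, the sets $E \setminus F_n$ decrease to $\emptyset$, so continuity from above (the preceding lemma) drives $\VA(E \setminus F_n) \to 0$. Hence $\VA(E) = 0$, and since $E \in \A$ this means $P(E) = \VA(E) = 0$, so $\VA(F) = P(B) = P(A) = \lim_n \VA(F_n)$.

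The step I expect to be the main obstacle is the bookkeeping around the maximizing sets: arranging the $A_n$ to form a genuine increasing chain and identifying the ``gap'' $E = A_F \setminus A$ as an $\A$-measurable set on which $\VA$ coincides with $P$. Once the gap is isolated, maximality of the $A_n$ gives $\VA(E \cap F_n) = 0$, null-additivity trades $E$ for $E \setminus F_n$, and continuity from above closes the argument.
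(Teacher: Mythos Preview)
Your proof is correct and follows essentially the same route as the paper's: isolate the gap $E=A_F\setminus\bigcup_n A_{F_n}$, use maximality of the $A_{F_n}$ to get $\VA(E\cap F_n)=0$, invoke null-additivity to replace $E$ by $E\setminus F_n$, and finish with continuity from above. The paper runs this as a proof by contradiction and is less explicit about nesting the $A_{F_n}$ and enlarging $A_F$ to contain $A$, while your direct version handles that bookkeeping more carefully; but the mechanism is identical.
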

\begin{proof}Indeed, assume that $\VA$ is null-additive and let $\{F_n\}_n$ be a
sequence of measurable sets increasing to $F$ such that
$\lim_n\VA(F_n)<\VA(F)$. Set $C_n=\left(A_F\setminus \bigcup_n
A_{F_n}\right)\cap F_n$. $\{C_n\}_n$ is increasing to
$\left(A_F\setminus \bigcup_n A_{F_n}\right)$ and $\VA(C_n)=0$ for
every $n$. Now, set $D_n= \left(A_F\setminus \bigcup_n
A_{F_n}\right)\setminus C_n$. $\{D_n\}_n$ is decreasing to the
emptyset, and by continuity from above $\VA(\bigcap_n D_n)=0$.
However, by null-additivity $\VA(D_n)=\VA \left(A_F\setminus
\bigcup_n A_{F_n}\right)=P\left(A_F\setminus \bigcup_n
A_{F_n}\right)$ which is positive.
\end{proof}

\begin{defn}  We say that a collection $\mathcal C\subseteq \mathcal
F$ is \emph{dense} in $\mathcal F$ iff for every $\eps>0$ and $F\in
\mathcal F$ there exist  $C\in\mathcal C$ such that $C\subseteq F$
and $P(F\backslash C)<\eps$.
\end{defn}

Since  $\mathcal A$ is a $\sigma$-algebra, then being dense in
$\mathcal F$ is equivalent to that, for every $F\in\mathcal F$, there
exist $A\in\mathcal A$ included in $F$ such that $P(F\backslash
A)=0$.

\begin{proposition}\label{weak_equi}The following are equivalent:
\\1. $\mathcal A$ is dense in $\mathcal F$;
\\2. $\int_Xfd\VA=\int_XfdP$ for every function
$f\in \mathcal M$;
\\3. $\lim_n\int_Xf_nd\VA= \int_Xfd\VA$
for every increasing sequence of functions $\{f_n\}_n\subset
\mathcal M$ converging weakly $\VA$-a.e. to a function $f$; and
\\4. $\VA$ is null-additive.
\end{proposition}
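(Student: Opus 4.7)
The plan is to close the circle (1)$\Leftrightarrow$(2), (1)$\Rightarrow$(4), (4)$\Rightarrow$(1), (3)$\Leftrightarrow$(4). The first three implications should be immediate bookkeeping on top of earlier results; the heart of the matter will be (4)$\Rightarrow$(1).

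For the easy steps, I would first unpack density: since $P(F) = P(A_F) + P(F \setminus A_F) = \VA(F) + P(F \setminus A_F)$, density is equivalent to the pointwise identity $\VA(F) = P(F)$ on $\F$. This immediately yields (1)$\Leftrightarrow$(2): under density the Choquet integral with respect to the (now additive) capacity $\VA = P$ collapses to the Lebesgue integral, while conversely testing (2) on $f = \1_F$ gives back $\VA(F) = P(F)$. The implication (1)$\Rightarrow$(4) is then trivial since $\VA = P$ is additive, hence null-additive. For (3)$\Leftrightarrow$(4) I would invoke Theorem \ref{choq_con} of Li and Song, which characterizes Choquet monotone convergence along weakly $\VA$-a.e.\ convergent sequences as null-additivity plus continuity from below of $\VA$; the second clause is automatic for induced capacities by Lemma \ref{null_2_cont} of this section, so the two conditions collapse to (4) alone.

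The crux is (4)$\Rightarrow$(1). Fix $F \in \F$ and set $B := F \setminus A_F$. I would first argue $\VA(B) = 0$: any $A' \in \A$ with $A' \subseteq B$ is disjoint from $A_F$, whence $A' \cup A_F \in \A$ sits inside $F$, and the maximality of $A_F$ forces $P(A') = 0$. The key observation is then the disjoint decomposition
$$A_F^c = B \cup F^c,$$
which holds because $F^c \subseteq A_F^c$ and $B = F \cap A_F^c$. Applying null-additivity with the null set $B$ gives $\VA(F^c) = \VA(B \cup F^c) = \VA(A_F^c) = P(A_F^c) = 1 - \VA(F)$, whereas the inclusion $A_{F^c} \subseteq F^c$ yields the automatic bound $\VA(F^c) \leq P(F^c) = 1 - P(F)$. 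Comparing the two gives $P(F) \leq \VA(F)$, and since the reverse inequality is automatic, $\VA(F) = P(F)$ for every $F$, i.e., density.

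The main obstacle is not computational but conceptual: finding a set on which null-additivity can be profitably applied. Pairing the $\F$-residue $B$ (which has $\VA$-measure zero but generally positive $P$-measure) with $F^c$ so that the union lands in $\A$ is the essential move; once one spots $B \cup F^c = A_F^c$, the arithmetic is forced. I do not expect to need the non-atomicity hypothesis of this section for the proposition itself.
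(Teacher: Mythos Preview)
Your proof is correct and follows essentially the same route as the paper. The paper organizes the implications as $(1)\Leftrightarrow(2)$, $(1)\Rightarrow(3)$ (via Levi's theorem), $(3)\Leftrightarrow(4)$ (via Li--Song plus the lemma that null-additivity of $\VA$ forces continuity from below), and $(4)\Rightarrow(1)$; your logical circuit is equivalent, with the cosmetic difference that you route $(1)\Rightarrow(4)$ directly instead of $(1)\Rightarrow(3)$.

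For the key step $(4)\Rightarrow(1)$, the paper argues by contradiction using the set $B=A_F\cup F^c$: since $\VA(B^c)=\VA(F\setminus A_F)=0$, null-additivity gives $\VA(B)=\VA(X)=1$, contradicting $\VA(B)\le P(B)<1$. Your argument is the complementary (and direct) version of the same idea: you work with $F\setminus A_F$ itself as the null set, observe $A_F^c=(F\setminus A_F)\cup F^c$, and compare $\VA(F^c)=P(A_F^c)$ with $\VA(F^c)\le P(F^c)$. The essential move---spotting that the $\F$-residue $F\setminus A_F$ is $\VA$-null and that adjoining it to a suitable set lands back in $\A$---is identical. Your remark that non-atomicity is not actually used is also correct; the paper's final argument does not invoke it either.
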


\begin{proof}
$(1)\Leftrightarrow (2)$. Pick $F\in\mathcal F$. If $\mathcal A$ is
dense in $\mathcal F$ then $\VA(F)=\max\{P(A) : A\in\mathcal A,
A\subseteq F\}=P(F)$, therefore the integrals coincide.

Now assume that $\int_Xfd\VA=\int_XfdP$ for every
 function $f\in\mathcal M$. In particular, for all $F\in\mathcal F$,
$\int_X\1_Fd\VA=\int_X\1_FdP=P(F)$. But $\int_X\1_Fd\VA$ is equal to
the probability $P(A)$ of some $A\in\mathcal A$ contained in $F$.
Thus $\mathcal A$ is dense in $\mathcal F$.

$(1)\Rightarrow(3)$ is simply  Levi's monotone convergence theorem.

$(3)\Leftrightarrow(4)$  by  Theorem \ref{choq_con} (or Theorem
\ref{con_weak_conv}) and Lemma \ref{null_2_cont}.

$(4)\Rightarrow(1)$. Assume that $\mathcal A$ is not dense in
$\mathcal F$. Then there exist $F\in\mathcal F$ of positive
probability such that $A_F=\arg\max\{P(A) : A\in \mathcal A,
A\subset F\}$ satisfies $P(A_F)<P(F)$. Denote  $B=A_F\cup F^c$.
 $P(B^c)=P(F\setminus A)>0$ therefore $P(B)<1$, thus $\VA(B)<1$.
 However, $\VA(B^c)=\VA(F\setminus A)=0$, and by null additivity
 $\VA(B)=\VA(B\cup B^c)=\VA(X)=1$. A contradiction.

\end{proof}

\subsection{$P$ - Almost Everywhere Convergence}
\begin{defn} A sub-$\sigma$-algebra $\A$ satisfies property $(A1)$  if for every $F\in\mathcal F$ with
$P(A_F)>0$ there exist $\delta>0$ such that for every $G\in\mathcal
F$ contained in $F$ which $P(F\setminus G)<\delta$ satisfies
$P(A_G)>0$.
\end{defn}

\begin{example}\label{cont_not_*} Consider the unit interval $[0,1]$, $\mathcal B^1$ the Borel
$\sigma$-algebra over $[0,1]$ and $m^1$ the Lebesgue probability
measure. Let $C\subset [0,1]$ be a set of probability $0$ and of
continuum cardinality. There exist a one-to-one correspondence
$f:C\rightarrow [0,1]\setminus C$. Now, let $\A=\{A \subseteq [0,1]
: x\in A \textrm{ iff } f(x)\in A\}$. The induced capacity is
continuous from below, however (A2) is not satisfied. Indeed,
$m^1_{\A}([0,1])=1$ and $m^1_{\A}([0,1]\setminus C)=0$ where
$m^1([0,1]\setminus ([0,1]\setminus C))=0$. That is $\A$ does not
satisfy property (A1).
\end{example}

\begin{example}\label{exmp_not_dense} Let $\mathcal A$ be the sub-$\sigma$-algebra which
contains all sets $A\in\mathcal B^1$ which satisfy
$A=A+{{1}\over{2}}$ (mod $1$). Note that $\mathcal A$ is not dense
in $\mathcal B^1$ since, for example, ${m^1}_{\mathcal
A}([0,1/4])=0$. For $F\in\mathcal B^1$, $A_F=F\cap (F+
{{1}\over{2}}$ (mod $1$)$)$. Now, assume that $\{F_n\}_n$ is
increasing to $F$. We will show that $\{A_{F_n}\}_n$ is increasing
to $A_F$ which will prove that $\VA$ is continuous from below.
Indeed, if $x\in A_F$ then $x\in F\cap (F+ {{1}\over{2}}$ (mod
$1$)$)$. That is, there exist $n\in\mathbb N$ such that $x\in
F_n\cap (F_n+ {{1}\over{2}}$ (mod $1$)$)$, meaning that $x\in
A_{F_n}$, as desired. It follows by Proposition \ref{weak_equi} that
${m^1}_{\mathcal A}$ is not null-additive . For example,
${m^1}_{\mathcal A}([0,1/2])=0$ and ${m^1}_{\mathcal A}([1/2,1])=0$,
whereas ${m^1}_{\mathcal A}([0,1])=1$. Furthermore, $\A$ satisfies
 (A2). For $F\in\mathcal F$ with $P(A_F)>0$, set
$\delta={{P(A_F)}\over{4}}$.
\end{example}

\begin{defn}An induced capacity $\VA$ is \emph{$P$-null-additive}
if $\VA(G)=\VA(F)$ for every $G,F$ such that $G\subseteq F$ and
$P(F\setminus G)=0$.
\end{defn}

\begin{proposition}\label{a.e_con} The following are equivalent:
\\1. $\mathcal A$ satisfies property (A1) ;
\\2. $\lim_n\int_Xf_nd\VA= \int_Xfd\VA$
for every increasing sequence of functions $\{f_n\}_n\subset
\mathcal M$ converging $P$-a.e. to a function $f$; and
\\3. $\VA$ is continuous from below and $P$-null-additive.
\end{proposition}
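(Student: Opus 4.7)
The plan is to establish the cyclic chain $(1)\Rightarrow(3)\Rightarrow(2)\Rightarrow(1)$. For $(1)\Rightarrow(3)$ I verify continuity from below and $P$-null-additivity separately, via a single template. Assuming the desired property fails, I isolate an $\mathcal A$-measurable ``gap'' set $F'$ of positive $P$-measure: for continuity from below I take $F':=A_F\setminus\bigcup_n A_{F_n}$ (having first chosen the argmax sets so that $\{A_{F_n}\}_n$ is increasing), and for $P$-null-additivity with $G\subseteq F$ I take $F':=A_F\setminus A_G$. In each case the intersection $F'\cap F_n$ (respectively $F'\cap G$) exhausts $F'$ up to a set of arbitrarily small $P$-measure, so property (A1) applied to $F'$ produces an $\mathcal A$-measurable subset of positive $P$-measure sitting inside $F_n$ (respectively $G$) and disjoint from $A_{F_n}$ (respectively $A_G$) because it lies in $F'$. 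Its union with $A_{F_n}$ (respectively $A_G$) is then an $\mathcal A$-measurable subset of $F_n$ (respectively $G$) of strictly larger $P$-measure, contradicting the maximality defining $\VA(F_n)$ (respectively $\VA(G)$).

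For $(3)\Rightarrow(2)$, let $\{f_n\}\subset\M$ be pointwise increasing with $f_n\to f$ $P$-a.e. The pointwise limit $g:=\lim_n f_n$ exists everywhere by monotonicity, and Theorem~\ref{cho_point_conv} (applicable since $\VA$ is continuous from below) yields $\lim_n\int_X f_n\,d\VA=\int_X g\,d\VA$. Since $g=f$ $P$-a.e., for each $t\ge 0$ the symmetric difference of $\{g\ge t\}$ and $\{f\ge t\}$ is $P$-null, so enclosing both sets in their union and invoking $P$-null-additivity twice gives $\VA(\{g\ge t\})=\VA(\{f\ge t\})$. Integrating in $t$ yields $\int_X g\,d\VA=\int_X f\,d\VA$ and closes the implication.

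For $(2)\Rightarrow(1)$ I argue by contrapositive. If (A1) fails at some $F$ with $P(A_F)>0$, choose for each $k$ a set $G_k\subseteq F$ with $P(F\setminus G_k)<2^{-k}$ and $\VA(G_k)=0$, and set $H_n:=\bigcap_{k\ge n}G_k$. Then $\{H_n\}_n$ is increasing; the inclusion $H_n\subseteq G_n$ forces $\VA(H_n)\le\VA(G_n)=0$; and $P(F\setminus H_n)\le\sum_{k\ge n}P(F\setminus G_k)\le 2^{1-n}$, so $\{\1_{H_n}\}_n$ is a pointwise increasing sequence of indicator functions converging to $\1_F$ $P$-a.e. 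Yet $\int_X\1_{H_n}\,d\VA=0$ for every $n$ while $\int_X\1_F\,d\VA=\VA(F)>0$, which contradicts (2). The most delicate step is $(1)\Rightarrow(3)$: one must choose the argmax sets $A_\cdot$ coherently modulo $P$-null $\mathcal A$-sets and observe that the gap $F'$ is itself $\mathcal A$-measurable with $A_{F'}=F'$, so that (A1) genuinely applies to it.
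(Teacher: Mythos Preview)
Your cyclic proof is correct. The implications $(1)\Rightarrow(3)$ and $(2)\Rightarrow(1)$ coincide in substance with the paper's arguments: the paper also isolates the $\mathcal A$-measurable ``gap'' $A_F\setminus\bigcup_n A_{F_n}$ (respectively the decomposition $A_F=A_G\uplus E\uplus H$) to derive continuity from below and $P$-null-additivity from (A1), and it also builds the same Borel--Cantelli style increasing sequence $H_n=\bigcap_{k\ge n}G_k$ to witness failure of monotone convergence when (A1) fails.

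The genuine difference is in $(3)\Rightarrow(2)$. The paper re-runs the $\varepsilon$-approximation of Theorem~\ref{choq_strong_con}, observing that continuity from below together with $P$-null-additivity guarantees, for each $F\in\mathcal F$, $\varepsilon'>0$ and $\delta>0$, an $N$ with $\VA(\{x\in F: f(x)-f_n(x)<\delta\})>\VA(F)-\varepsilon'$ for $n>N$. Your route is cleaner and more modular: you let $g:=\lim_n f_n$ be the everywhere-defined pointwise limit, invoke Theorem~\ref{cho_point_conv} once to get $\lim_n\int f_n\,d\VA=\int g\,d\VA$, and then use $P$-null-additivity at the level-set level (via the union $\{g\ge t\}\cup\{f\ge t\}$) to conclude $\int g\,d\VA=\int f\,d\VA$. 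This avoids repeating the approximation machinery and makes transparent exactly where each hypothesis in (3) is used: continuity from below for the pointwise step, $P$-null-additivity for the transfer from $g$ to $f$. The paper's direct argument, by contrast, is self-contained and does not rely on the citation to Murofushi--Sugeno underlying Theorem~\ref{cho_point_conv}.
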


\begin{proof} $(1) \Leftrightarrow (3)$.  Assume that $\VA$ is continuous from below and $P$-null-additive. Assume that there exist $F\in\mathcal F$ with
$P(A_F)>0$ such that for every $\delta>0$ there exist
$G\in\mathcal F$ contained in $F$ such that $P(F\setminus
G)<\delta$ and $P(A_G)=0$. Pick a sequence $\{\delta_n\}_n$ such
that $\delta_n\rightarrow 0$, then there is a sequence
$\{G_n\}_n\subset\mathcal F$ such that $G_n\subseteq F$,
$P(F\setminus G_n)<\delta_n$ and $P(A_{G_n})=0$ for all $n$.
$\1_{G_n}$ converges to $\1_F$ in probability $P$, therefore there
exist a subsequence $\1_{G_{n_m}}$ that converges to $\1_F$
$P$-almost everywhere.  The sequence $\{H_m\}_m$ where
$H_m=\bigcap_{k\geq m}G_{n_k}$ is increasing and $P(A_{H_m})=0$
for every $n$. Set $\widetilde{F}=\bigcup_mH_m$. Showing that
$P(A_{\widetilde{F}})>0$ will establish that $\VA$ is not
continuous from below. Since $\VA$ is $P$-null-additive and
$P(F\setminus \widetilde{F})=0$, we obtain that
$\VA(\widetilde{F})=\VA(F)>0$, as desired.

Conversely, assume that there exist $G\subseteq F$ such that
$P(F\setminus G)=0$ and $\VA(F)>\VA(G)$. Then $A_F=A_G\uplus E
\uplus H$ where $E\subseteq F\setminus G, H\subseteq G$. Now,
$\VA(E\uplus H)=P(H)>0$ where $\VA(H)=0$ and $P(E)=0$, therefore
(A1) does not hold. Furthermore, if there is a sequence $\{F_n\}_n$
increasing to $F$ such that $\lim_n \VA(F_n)<\VA(F)$. We obtain that
$\lim_n P(A_{F_n})<P(A_F)$, in particular, $\bigcup_n
A_{F_n}\subsetneq A_F$. Note that  $A'=A_F \setminus \bigcup_n
A_{F_n}\in\mathcal A$, $\VA(F_n\cap A')=0$  for all $n$ and
$\VA(F\cap A')>0$. $P(A_{F'})>0$ where $F'=F\cap A'$. The sequence
$\{F'_n\}_n$ where $F'_n=F_n\cap A'$ increases to $F'$, thus since
$\mathcal A$ satisfies (A1) there is $n$ large enough so that
$P(A_{F'_n})>0$, a contradiction.

$(2) \Leftrightarrow (3)$. Assume first that $\lim_n\int_Xf_nd\VA=
\int_Xfd\VA$ for every increasing sequence of functions
$\{f_n\}_n\subset \mathcal M$ converging $P$-a.e.\ to a function $f$.
The continuity from below of $\VA$ is obvious. If $G\subseteq F$
such that $P(F\setminus G)=0$, then $P(\1_G=\1_F)=1$ therefore
$\VA(G)=\VA(F)$, and we obtain weak null-additivity as well.

Conversely, let $\{f_n\}_n$ be an increasing sequence converging
$P$-a.e.\ to a function $f$. That is, $P(\{x\in F :
f_n(x)\rightarrow f(x)\})=P(F)$, for every $F\in\F$. If $\VA$
assume that $\VA$ is continuous from below and $P$-null-additive,
then for every $F\in\mathcal F, \eps'>0$ and $\delta>0$ there is
$N\in\mathbb N$ such that for every $n>N$, $v(\{x\in F :
f(x)-f_n(x)<\delta\})>v(F)-\eps'$. From this point the proof is
similar to that of Theorem \ref{choq_strong_con} (note that since
the integral w.r.t.\ an induced capacity is the concave integral,
using a collection of decreasing sets is not necessary).

\end{proof}

\subsection{Strong Almost Everywhere Convergence}
Since an induced capacity $\VA$ is convex, then
$\int_X\1_Fd\VA=\VA(F)$ for all $F\in\mathcal F$. Thus, Theorem
\ref{choq_strong_con} (and  Theorem \ref{conc_con}) can be applied
to an induced capacity whenever strong almost everywhere convergence
is in hand. The theorem states that monotone convergence holds for
the integral w.r.t.\ an induced capacity iff it is continuous from
below.

\begin{example}\label{exmp_not_cont}Consider $X=[0,1]^2$, $\mathcal B^2$ the Borel
$\sigma$-algebra over $[0,1]^2$ and $m^2$ the Lebesgue probability
measure. Let $\mathcal A=\{[0,1]\times B : B\in \mathcal B^1\}$.
Consider any sequence of the form $\{B_n\times [0,1]\}_n$ where
$\{B_n\}_n$ is increasing to $[0,1]$. ${m^2}_{\mathcal A}(B_n\times
[0,1])=0$ for all $n$ whereas ${m^2}_{\mathcal A}([0,1]^2)=1$. That
is $m^2_{\A}$ is not continuous from below.
\end{example}

\begin{defn} A sub-$\sigma$-algebra $\A$ satisfies property $(A2)$ if for every $F\in\F$
such  that $P(A_F)>0$ and every $\{F_n\}_n$ increasing to $F$ there
is $n$ such that $P(A_{F_n})>0$.
\end{defn}

\begin{proposition}The following are equivalent:
\\1. $\mathcal A$ satisfies property (A2) ;
\\2. $\lim_n\int_Xf_nd\VA= \int_Xfd\VA$
for every increasing sequence of functions $\{f_n\}_n\subset
\mathcal M$ converging strongly $\VA$-a.e. to a function $f$;
\\3. $\lim_n\int_Xf_nd\VA= \int_Xfd\VA$
for every increasing sequence of functions $\{f_n\}_n\subset
\mathcal M$ converging
 pointwise to a function $f$; and
\\4. $\VA$ is continuous from below.
\end{proposition}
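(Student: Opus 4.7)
The plan is to split the four-way equivalence into (a) the two implications linking integral convergence to continuity from below, which follow directly from the general Choquet theorems already stated, and (b) the equivalence between property (A2) and continuity from below of $v_{\mathcal A}$, which is the genuine new content. Note that since $v_{\mathcal A}$ is convex, the Choquet integral of an indicator is just the capacity, so Theorem \ref{choq_strong_con} and Theorem \ref{cho_point_conv} apply verbatim to $v_{\mathcal A}$ and immediately yield $(2) \Leftrightarrow (4)$ and $(3) \Leftrightarrow (4)$.

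For $(4) \Rightarrow (1)$ the argument is short: suppose $v_{\mathcal A}$ is continuous from below, take $F$ with $P(A_F) > 0$ and $\{F_n\}_n$ increasing to $F$. Then $v_{\mathcal A}(F_n) = P(A_{F_n})$ increases to $v_{\mathcal A}(F) = P(A_F) > 0$, so $P(A_{F_n}) > 0$ for $n$ large enough.

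The main work is $(1) \Rightarrow (4)$, which mirrors the corresponding step in Proposition \ref{a.e_con}. I would argue by contrapositive: assume there exists an increasing sequence $\{F_n\}_n$ with $\lim_n v_{\mathcal A}(F_n) < v_{\mathcal A}(F)$, where $F = \bigcup_n F_n$. Equivalently $\lim_n P(A_{F_n}) < P(A_F)$, so the $\mathcal A$-measurable set
$$A' := A_F \setminus \bigcup_n A_{F_n}$$
satisfies $P(A') > 0$. The key technical step is showing $v_{\mathcal A}(F_n \cap A') = 0$ for every $n$: if $B \in \mathcal A$ and $B \subseteq F_n \cap A'$, then $B \cup A_{F_n} \subseteq F_n$ is $\mathcal A$-measurable, so maximality of $A_{F_n}$ (together with $B \cap A_{F_n} = \emptyset$) forces $P(B) = 0$. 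Since $A' \subseteq A_F \subseteq F$, we have $F \cap A' = A'$, which is itself $\mathcal A$-measurable with $P(A_{A'}) = P(A') > 0$; meanwhile the sequence $F'_n := F_n \cap A'$ increases to $A'$ with $P(A_{F'_n}) = v_{\mathcal A}(F'_n) = 0$ for all $n$. This directly contradicts property (A2) applied to $A'$ and $\{F'_n\}_n$.

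The only delicate point is the maximality argument producing $v_{\mathcal A}(F_n \cap A') = 0$; everything else is bookkeeping. Once $(1) \Leftrightarrow (4)$ is established, chaining it with the two implications from Theorems \ref{choq_strong_con} and \ref{cho_point_conv} closes the cycle $(1) \Leftrightarrow (2) \Leftrightarrow (3) \Leftrightarrow (4)$.
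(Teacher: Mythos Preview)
Your proposal is correct and follows essentially the same route as the paper: the equivalences $(2)\Leftrightarrow(4)$ and $(3)\Leftrightarrow(4)$ are quoted from Theorems~\ref{choq_strong_con} and~\ref{cho_point_conv}, and $(1)\Leftrightarrow(4)$ is handled via the same contrapositive construction with $A' = A_F \setminus \bigcup_n A_{F_n}$ and $F'_n = F_n \cap A'$. The only difference is that you supply an explicit maximality argument for $v_{\mathcal A}(F_n \cap A') = 0$, whereas the paper simply asserts it.
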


\begin{proof}
$(1)\Leftrightarrow (4)$. Clearly continuity from below implies
(A2). As for the other implication, assume that $\{F_n\}_n$ is
increasing to $F$ and that $\lim_n\VA(F_n)<\VA(F)$. Setting
$A'=A_F\setminus \bigcup_nA_{F_n}$, then $\VA(A')=P(A')>0$. Denote
$F'_n=F_n\cap A'$ for all $n$, then $\{F'_n\}_n$ is increasing to
$A'$ and $\VA(F'_n)=0$ for all $n$, that is $\VA$ is not continuous
from below.

$(2)\Leftrightarrow (4)$ by Theorem  \ref{choq_strong_con} (and Theorem
\ref{conc_con}).

$(3)\Leftrightarrow(4)$ by Theorem \ref{cho_point_conv}.
\end{proof}

To conclude this section, which  discusses monotone convergence of
the integral w.r.t.\ the induced capacity in non-atomic probability
spaces, we present the following diagram which summarizes the
properties presented.
\begin{displaymath}
\xymatrix{ weak\textrm { } convergence  \ar@2[r] \ar@2[d]_{(*)}  &  P-a.e.\textrm{ }convergence  \ar@2[d]       \ar@2[r]                   &  strong\textrm{ } convergence  \ar@2[d] \\
           null-additivity  \ar@2[d] \ar@2[r]  \ar@2[u]    &  cont.+ P-null-additivity \ar@2[d] \ar@2[u] \ar@2[r]        &  cont. \ar@2[u]  \ar@2[d] \\
           density   \ar@2[u]      \ar@2[r]                &  property\textrm { }(A1) \ar@2[u]               \ar@2[r]                &  property\textrm { }(A2)  \ar@2[u]
}
\end{displaymath}

The top row shows to which type of converging sequences of
functions monotone convergence holds. The middle row indicates the
appropriate induced capacity which will suffice for the relevant
type of convergence. The bottom row states the property of the
sub-$\sigma$-algebra which would yield a corresponding  property of the
induced capacity.  For instance, the arrow marked with $(*)$ is
simply the consequence of Proposition \ref{weak_equi} that monotone
convergence of the integral w.r.t.\ an induced capacity holds for
every weak almost everywhere convergent sequence if and only if the
induced capacity is null-additive.

\section{Discrete Probability Spaces}\label{sect_atom}

Consider the case where  $X$ is a countable (possibly finite) space
endowed with some probability $P$. For the sake of convenience, if
$|X|=n$ then we will assume that $X=\{1,...,n\}$, otherwise
$X=\mathbb N$. Here $\mathcal A$ is some $\sigma$-algebra generated
by a partition of $X$, $\{A_i\}_{i\in\mathbb N}$. Namely,
$\{A_i\}_{i\in\mathbb N}$ are the atoms of $\mathcal A$.

Note that in this case \begin{equation}\label
{count_int}\int_Xfd\VA=\sum_i\bigg(\inf_{x\in A_i} f(x)\bigg)
P(A_i)\end{equation} for all $f\in\mathcal M$.

\begin{example} \label{exmp_conv} Let $P(k)\thickapprox {{1}\over{k^2}}$ for every $k\in\mathbb N$, and  $\mathcal
A$ be the $\sigma$-algebra generated by the partition $\{\{2k,2k-1\}
: k\in\mathbb N\}$.

Let $f=1$
 and for every $n$
\begin{displaymath}f_n(k)=\left\{\begin{array}{ll}
1, & \textrm{$k\leq n$,}\\%
0, &   \textrm{$k> n$.}
\end{array}\right.
\end{displaymath}

By (\ref{count_int}) $\int_{\mathbb N}fd\VA=1$ and $\int_{\mathbb
N}f_nd\VA=\sum_{k\leq n}P(k)$ where the later converges to $1$.

Denote by   $\mathcal T=(\emptyset,\mathbb N)$ the trivial field.
$\int_{\mathbb N}fdP_{\mathcal T}=1$ but since $\min f_n=0$ for all
$n$, $\int_{\mathbb N}f_ndP_{\mathcal T}=0$ for all $n$.
\end{example}

Example \ref{exmp_conv} might only reflect two particular structures
of $\mathcal A$. In the first example all atoms of $\mathcal A$ are
finite and we obtain a sequence of integrals which converge to the
integral of the limit function. In the second example there is an
infinite atom and we are unable to obtain integral convergence. The
following proposition shows that in fact there are only two cases.

\begin{proposition} The following are equivalent:
\\1. $\mathcal A$ is generated by atoms
consisting of finitely many elements of $X$;
\\2. $\lim_n\int_Xf_nd\VA= \int_Xfd\VA$ for every increasing sequence of functions $\{f_n\}_n$
converging pointwise to a function $f$;
\\3. $\lim_n\int_Xf_nd\VA= \int_Xfd\VA$ for every increasing sequence of functions $\{f_n\}_n$
converging strongly $v$-a.e. to a function $f$; and
\\4. $\VA$ is continuous from below.
\end{proposition}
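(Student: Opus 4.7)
The plan is to reduce the proposition to the single equivalence $(1) \Leftrightarrow (4)$, since $(2) \Leftrightarrow (4)$ is obtained by applying Theorem \ref{cho_point_conv} to the capacity $\VA$, and $(3) \Leftrightarrow (4)$ by applying Theorem \ref{choq_strong_con} to $\VA$ (both theorems require only that $\VA$ is a capacity, which is automatic, and yield the equivalence of the corresponding integral monotone convergence with continuity from below).

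For $(1) \Rightarrow (4)$ I will take an arbitrary increasing sequence $F_n \uparrow F$ and exploit the fact that every $\mathcal A$-measurable set is a countable union of atoms of the partition $\{A_i\}$. Writing $A_F = \bigcup\{A_i : A_i \subseteq F\}$ and analogously for each $A_{F_n}$, the sequence $\{A_{F_n}\}_n$ is automatically increasing and contained in $A_F$. The crux is to check $\bigcup_n A_{F_n} = A_F$: given any atom $A_i \subseteq F$, its finiteness combined with $F_n \uparrow F$ forces $A_i \subseteq F_N$ for some $N$, whence $A_i \subseteq A_{F_N}$. Countable additivity of $P$ then delivers $\VA(F_n) = P(A_{F_n}) \to P(A_F) = \VA(F)$.

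For $(4) \Rightarrow (1)$ I will argue the contrapositive by exhibiting an explicit increasing sequence that witnesses a failure of continuity from below whenever some atom $A_{i_0}$ of $\mathcal A$ is infinite. Enumerating $A_{i_0} = \{x_1, x_2, \ldots\}$ and taking $F_n = \{x_1, \ldots, x_n\}$, each $F_n$ is a proper finite subset of $A_{i_0}$; since the atoms of $\mathcal A$ are pairwise disjoint and the only atom meeting $F_n$ is $A_{i_0}$ itself, no atom is contained in $F_n$, so $A_{F_n} = \emptyset$ and $\VA(F_n) = 0$. On the other hand $F_n \uparrow A_{i_0}$ and $\VA(A_{i_0}) = P(A_{i_0}) > 0$ under the natural assumption in the discrete setting that every singleton of $X$ carries positive probability (points of $P$-measure zero may be discarded without changing anything).

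I expect the main subtlety to sit in the identity $\bigcup_n A_{F_n} = A_F$ used in $(1) \Rightarrow (4)$: it rests entirely on finiteness of each atom, since with an infinite atom $A_i \subseteq F$ one can easily arrange $F_n \uparrow F$ with $A_i \not\subseteq F_n$ for every $n$, which is precisely the obstruction then exploited in the reverse direction. Once this geometric point is pinned down, the rest is a routine appeal to the previously established monotone convergence theorems.
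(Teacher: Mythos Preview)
Your proposal is correct. The contrapositive argument for $(4)\Rightarrow(1)$ is essentially identical to the paper's (including the same implicit assumption that the infinite atom has positive probability, which you at least flag explicitly). The difference lies in how you close the cycle with condition $(1)$.

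The paper does not prove $(1)\Rightarrow(4)$ directly. Instead it proves $(1)\Rightarrow(2)$ by a hands-on $\eps$-argument using the explicit formula $\int_X f\,d\VA=\sum_i\bigl(\inf_{x\in A_i}f(x)\bigr)P(A_i)$: one truncates the sum at some $n_*$ so the tail is below $\eps$, and then uses finiteness of each remaining atom $A_i$ to find $N$ with $f(m)-f_n(m)$ uniformly small on $A_1\cup\cdots\cup A_{n_*}$ for $n\ge N$. Then $(2)\Rightarrow(4)$ is trivial, and the cycle is $(1)\Rightarrow(2)\Rightarrow(4)\Rightarrow(1)$. Your route is instead $(1)\Rightarrow(4)$ via the clean set-theoretic observation $\bigcup_n A_{F_n}=A_F$ (each finite atom contained in $F$ eventually sits inside some $F_n$), followed by an appeal to Theorem~\ref{cho_point_conv} for $(4)\Rightarrow(2)$. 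Your argument is shorter and more conceptual, isolating exactly where finiteness of atoms enters; the paper's argument is more self-contained in that it re-derives the monotone convergence in this special case from scratch rather than citing the general theorem, and it makes direct use of the integral formula specific to the discrete setting.
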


\begin{proof} $(1) \Rightarrow(2)$. Assume at first that $f$ is
$\VA$-integrable. Let $\{f_n\}_n$ be any increasing sequence
converging to $f$ and fix $\eps>0$. Let $n_*\in\mathbb N$ be big
enough so that $\sum_{i>n_*}\big(\inf_{x\in A_i}
f(x)\big)P(A_i)<\eps$. Define $N_1:=min\{n\geq 1
:f(m)-f_n(m)<{{\eps}\over{n_*P(A_1)}}, m\in A_1\}$. By induction,
define $N_{i}:=min\{n\geq N_{i-1}
:f(m)-f_n(m)<{{\eps}\over{n_*\inf_{A_i} f P(A_i)}}, m\in A_i\}$ for
all $2\leq i\leq n_*$. Since every $A_i$ is finite, it is guaranteed
that $N_i$ are finite for every $i\leq n_*$. Now,
$$\int_X f_nd\VA\geq \sum _{i\leq n_*}\left(\inf_{x\in A_i} f(x)\right) P(A_i)-\eps\geq
\int_X fd\VA-2\eps.$$ Since $\eps$ is arbitrarily small, we have
that $\int f_nd\VA\geq  \int fd\VA$. The inverse inequality is
obvious.

If $f$ is not $\VA$-integrable, that is $\int_Xfd\VA=\infty$, given
a large $L$, there exist $n_L$ $\sum_{i\leq n_L}\big(\inf_{x\in A_i}
f(x)\big)P(A_i)>L$. The proof from this point is similar to the one
above.

$(2)\Rightarrow (4)$ by the definition of continuity from below.

$(4) \Rightarrow (1)$. Assume that there exist an atom
$A=\{k_1,k_2.,,,\}$ with infinite  number of elements of $X$. Let
$A_n=\{k_1,\dots,k_n\}$ for every $n$.  $\VA(A)=P(A)$ whereas
$\VA(A_n)=0$ for all $n$.

$(3)\Leftrightarrow (4)$ by Theorem \ref{conc_con}.

\end{proof}

\begin{remark}By Theorem \ref{choq_con}, integral convergence
where functions converge weakly almost everywhere is obtained iff
the induced capacity is null-additive.  To obtain null-additivity of
the induced capacity it is easy to see that $\A$ must be generated
by the singletons of $X$, that is $\A=2^X$.
\end{remark}

\section{Discussion and Final Comments}

\subsection{Generalizing Induced Capacities}
Lehrer \cite{Lehrer2} considers a second model of decision making
with \emph{partially-specified probabilities (PSP)}. The PSP model
illustrates the case where a DM obtains the information of the
integrals of a sub collection $\mathcal G \subseteq \mathcal M$ of
nonnegative measurable functions. The DM makers then approximates
the integral of $f\in\mathcal M$ by the supremum over all positive
combinations of integrals of functions in $\mathcal G$ that are
smaller than or equal to $f$. Formally, the integral of  $f$ w.r.t.\
$\mathcal G$ is defined by
$$\int_XfdP_{\mathcal G}=\sup\bigg\{\sum_{i=1}^N\lambda_i \int_Xg_idP :
 \sum_{i=1}^N\lambda_i g_i\leq f\textrm{ and }g_i\in\mathcal G,\lambda_i\geq 0,N\in\mathbb N\bigg\}.$$
The PSP model is indeed a generalization of the PSA model, since
one could consider $\mathcal G$ to be the collection of
characteristic functions of some sub-$\sigma$-algebra $\A$.

In the case of PSP, the analogous definition for the induced
capacity is
$$v_{\mathcal G}(F):=\int_X\1_FdP_{\mathcal G}$$ for every $F\in\F$.
Properties of the induced capacity in the PSA model do not  hold
for the induced capacity in the PSP model. For example
$v_{\mathcal G}$ is no longer convex, thus the Choquet and concave
w.r.t.\ it do not coincide. It follows that  Lemma \ref{PSP_2_cap}
is no longer true. Integration type needs to be specified.

This discussion raises several questions: For which collections of
functions would the induced capacity would be convex? For which
collections of functions could Lemma \ref{PSP_2_cap} be formulated
for the Choquet and concave integral? It would also be interesting
to study the properties of such collections that yield integral
convergence theorems.

\subsection{Increasing Information}\label{increase_subsect}
Assume that, at each period of time, a DM obtains more information
regarding the underlying probability. That is $\{\A_n\}_n$ is an
increasing sequence of sub-$\sigma$-algebras. We consider the case
where the union of $\{\A_n\}_n$ generates a dense sub
$\sigma$-algebra of $\F$. For all $n$ denote by $v_n=\VAn $ the
induced capacity by $\A_n$. We would like to know whether $\lim
_n\int_Xfdv_n=\int_XfdP$. $\{v_n\}_n$ is an increasing sequence of
capacities. We say that that it increases continuously to $P$, if
$\lim_nv_n(F)=P(F)$ for all $F\in\F$.

\begin{lemma}$\lim _n\int_Xfdv_n=\int_XfdP$ for every $f\in \mathcal M$ iff $\{v_n\}_n$ increases continuously to $P$.
\end{lemma}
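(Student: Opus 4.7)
The forward implication is immediate: applying the hypothesis to $f=\1_F$ for an arbitrary $F\in\F$ and using that $\int_X\1_Fdv_n=v_n(F)$ and $\int_X\1_FdP=P(F)$ yields $v_n(F)\to P(F)$.

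For the converse, assume $v_n\uparrow P$ pointwise. First observe that $v_n(F)=P(A_F^n)\le P(F)$ for every $F\in\F$, so $v_n\le P$ as set functions, and hence $\int_X fdv_n\le\int_X fdP$ for every $f\in\M$ (by monotonicity in the capacity, which is clear from either the Choquet or concave formulation of the integral). Moreover, since $\A_n\subseteq\A_{n+1}$, the sequence of integrals $\int_X fdv_n$ is non-decreasing, so the limit exists and is bounded above by $\int_X fdP$. It remains to prove the reverse inequality.

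Suppose first $\int_X fdP<\infty$. Fix $\eps>0$. Because $P$ is a probability measure, the Lebesgue integral $\int_X fdP$ coincides with the concave integral with respect to $P$, so we may choose a representation
\[
\sum_{i=1}^K\lambda_i\1_{F_i}\le f,\qquad\sum_{i=1}^K\lambda_i P(F_i)>\int_X fdP-\eps,
\]
where $F_i\in\F$ and $\lambda_i\ge 0$. For each $i$, pointwise convergence $v_n(F_i)\to P(F_i)$ produces an index $n_i$ and a set $A_i^{n_i}\in\A_{n_i}$ with $A_i^{n_i}\subseteq F_i$ and $P(A_i^{n_i})>P(F_i)-\eps/(K\lambda_i)$ (we may assume $\lambda_i>0$ without loss of generality). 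Let $N=\max_{i\le K}n_i$; since the sequence $\{\A_n\}_n$ is increasing, all the sets $A_i^{n_i}$ lie in $\A_n$ for every $n\ge N$. Because $A_i^{n_i}\subseteq F_i$, we still have $\sum_i\lambda_i\1_{A_i^{n_i}}\le f$, and then by the concave integral formulation applied to $v_n$ (which equals the Choquet integral, since $v_n$ is convex),
\[
\int_X fdv_n\ge\sum_{i=1}^K\lambda_i v_n(A_i^{n_i})=\sum_{i=1}^K\lambda_i P(A_i^{n_i})>\sum_{i=1}^K\lambda_i P(F_i)-\eps>\int_X fdP-2\eps.
\]
Letting $\eps\to 0$ gives $\lim_n\int_X fdv_n\ge\int_X fdP$, and combined with the reverse inequality we obtain equality.

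If $\int_X fdP=\infty$, the same argument applies: given any $L>0$, pick a representation $\sum_i\lambda_i\1_{F_i}\le f$ with $\sum_i\lambda_i P(F_i)>L+1$, and use the same approximation of each $F_i$ by a set in some $\A_{n_i}$ to conclude $\int_X fdv_n>L$ for all sufficiently large $n$. The main obstacle I expect is the choice of sets $A_i^{n_i}$ converging to $F_i$ simultaneously for the different indices $i$; this is handled cleanly by the monotonicity of $\{\A_n\}_n$, which lets us fold all the choices into a single index $N$. (The assumption that $\bigcup_n\A_n$ generates a dense sub-$\sigma$-algebra is, interestingly, not needed for this lemma on its own: pointwise monotone convergence $v_n\uparrow P$ is the only thing used.)
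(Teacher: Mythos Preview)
Your proof is correct and follows essentially the same approach as the paper's: both directions are handled identically in spirit, with the forward implication via $f=\1_F$ and the converse via an $\eps$-approximation of $\int_X fdP$ by a finite sum $\sum_i\lambda_iP(F_i)$ followed by choosing $n$ large enough that $v_n(F_i)$ is close to $P(F_i)$ for each of the finitely many $i$. The only cosmetic difference is that you pass to explicit $\A_n$-measurable subsets $A_i^{n_i}\subseteq F_i$ before bounding $\int_X fdv_n$ from below, whereas the paper simply uses $v_n(F_i)\ge P(F_i)-\eps/(K\lambda_i)$ directly in the concave-integral lower bound; your detour is harmless but unnecessary.
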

\begin{proof}
The `only if' direction is obvious. As for the `if' direction,
assume that $\{v_n\}_n$ increases continuously to $P$ and fix
$\eps>0$. $\int_XfdP\leq \sum_{i=1}^N \lambda_iP(F_i) +\eps$ for
some  $\sum_{i=1}^N \lambda_i\1_{F_i}\leq f$. For all $i\leq N$
there exist $N_i$ such that for every $n\geq N_i$ $v_n(F_i)\geq
P(F_i)-{{\eps}\over {N\lambda_i}}$. Denoting  $M=\max_i N_i$, we
obtain that
$$\int_XfdP\leq \sum_{i=1}^N \lambda_iP(F_i) +\eps\leq \sum_{i=1}^N \lambda_iv_n(F_i) +2\eps\leq \int_Xfdv_n+2\eps$$
for all $n\geq M$. Since $\eps$ is arbitrarily small we have that
$\int_XfdP\leq \lim_n\int_Xfdv_n$. The converse inequality is
trivial.
\end{proof}
For the next $2$ examples consider the Borel $\sigma$-algebra over
$[0,1]$ endowed with the Lebesgue measure.
\begin{example} \label{inc_not_cont}   For every $n$, $\A_n$ is the algebra generated by the diadic partition of length $2^{-n}$.
The union of $\A_n$ generates the Borel $\sigma$-field. Now, let
$F$ be the set of all irrationals in $[0,1]$. $P(F)=1$ where
$v_n(F)=0$ for every $n$.
\end{example}

\begin{example} Let $\A_n$ be the $\sigma$-algebra generated by all Borel measurable sets contained in $[0,a_n)$  and
the set $[a_n,1]$, where $\{a_n\}_n$ is increasing  to $1$. It is
clear that $\{v_n\}_n$ increases continuously to $P$.

\end{example}
\begin{remark}\label{general_cont} $\{v_n\}_n$ increases continuously to $P$ iff the union of $\{\A_n\}_n$ is  dense in $\F$.
\end{remark}

In light of Lemma \ref{PSP_2_cap}, a DM preference order obtaining
partial information, as abundant as it might be, could be
completely different from that of a DM obtaining the complete
information. Consider Example \ref{inc_not_cont}. A fully informed
DM would prefer $\1_F$ to $\1_{F^c}$, whereas a DM who is informed
of $\A_n$ have no preference.

\subsection{Families of Functionals} We have seen in Section
\ref{sect_int_conv} that for both
Choquet and concave integrals, in order to obtain integral
monotone convergence, the capacity needs to satisfy the same
properties  (considering of course a specific type of a converging
sequence of functions).

Which properties of the Choquet and concave integral are essential
for obtaining monotone convergence for the exact same capacities? In
other words, assume that one defines a new functional w.r.t.\ $v$
over $\mathcal M$. Denote it by $I$. What characterizes $I$ so that
monotone convergence would occur for the exact same properties of
the capacities that yield monotone convergence for the Choquet and
concave integrals?


\begin{thebibliography}{99}

\bibitem{Ans-Aum} Anscombe, F. J. and Aumann, R. J. (1963) A definition of subjective
probability, \textit{Annals of Mathematical Statistics}, 34,
199-295.

\bibitem{Sch2} Gilboa, I. and Schmeidler, D. (1989) Maxmin expected utility non-unique prior, \textit{Journal of Mathematical Economics}, 18, 141-153.

\bibitem{Choquet} Choquet, G. (1955) Theory of
capacities, \textit{Ann. Inst. Fourier}, 5, 131-295.

\bibitem{Ell} Ellsberg, D. (1961) Risk, Ambiguity, and the Savage axioms, \textit{ Quarterly Journal of Economics}, 75, 643-669.


\bibitem{Wang} Klir, G. J. and Wang, Z.  (1992) \emph{ Fuzzy
measure theory,} {Plenum, New York}.

\bibitem{Lehrer1} Lehrer, E. (2005) A new integral for
capacities, \emph{Economic Theory}, to appear.

\bibitem{Lehrer2} Lehrer, E. (2006)  Partially-specified probabilities: decisions and games, mimeo.

\bibitem{Lehrer3} Lehrer, E. and Teper, R. (2007)  The concave integral over large spaces, mimeo.

\bibitem{Sav} Savage, L. J.  (1954) \emph{ The foundations of tatistics,} {Wiley, New York}.

\bibitem{Sch} Schmeidler, D. (1989) Subjective probabilies without additivity, \textit{Econometrica}, 57, 571-587.

\bibitem{Suge2} Murofushi, T. and Sugeno, M.   (1991) A theory of fuzzy measures: representations, the Choquet integral and
null-sets, \textit{J. Math. Anal. App.}, 159, 532-549.

\bibitem{Song} Li, J. and Song, J.   (2005) Lebesgue theorems in non-additive
measure theory, \textit{Fuzzy sets and systems}, 149, 543-548.

Japan.
\end{thebibliography}
\end{document}